\documentclass[12pt]{amsart}
\usepackage[margin=1.25in]{geometry}
\usepackage{latexsym}
\usepackage{amsmath}
\usepackage{amssymb}
\usepackage{amsthm}
\usepackage{stmaryrd}
\usepackage{amscd}
\usepackage{enumerate} 
\usepackage{amssymb} 
\usepackage{mathrsfs}
\usepackage[all]{xy}
\usepackage{color}
\usepackage{graphicx}
\usepackage{mathtools}
\usepackage{comment}
\usepackage{multirow}
\usepackage[pagebackref]{hyperref}
\hypersetup{colorlinks=true}

\numberwithin{equation}{section}

\title{Steenbrink-type vanishing for surfaces in positive characteristic}
\author{Tatsuro Kawakami}
\email{tatsurokawakami0@gmail.com}
\address{Department of Mathematics, Graduate School of Science, Kyoto University, Kyoto 606-8502, Japan}

\def\phi{\varphi}
\def\epsilon{\varepsilon}
\def\tilde{\widetilde}

\def\log{\operatorname{log}}

\def\Spec{\operatorname{Spec}}

\def\Supp{\operatorname{Supp}}

\def\Exc{\operatorname{Exc}}

\def\m{{\mathfrak m}}

\newcommand{\Q}{\mathbb{Q}} 
\newcommand{\C}{\mathbb{C}} 
 
\newcommand{\Z}{\mathbb{Z}}

\newcommand{\sO}{\mathcal{O}}

\newcommand{\mydot}{{{\,\begin{picture}(1,1)(-1,-2)\circle*{2}\end{picture}\ }}}

\theoremstyle{plain}
\newtheorem{thm}{Theorem}[section] 

\newtheorem{prop}[thm]{Proposition}

\newtheorem{lem}[thm]{Lemma}
\theoremstyle{definition} 
\newtheorem{defn}[thm]{Definition}
\newtheorem{conv}[thm]{Convention}

\theoremstyle{remark}
\newtheorem{rem}[thm]{Remark}

\newtheorem{defn and notation}[thm]{Definition and Notation}

\newtheorem*{clproof}{Proof of Claim}
\newtheorem{clm}{Claim}

\theoremstyle{plain}
\newtheorem{theo}{Theorem}

\keywords{Steenbrink vanishing; singularities; surfaces; positive characteristic}
\subjclass[2020]{14F17,14F10,14J17}

\baselineskip = 15pt
\footskip = 32pt

\begin{document}
\tolerance = 9999

\begin{abstract} 
  Let $(X,B)$ be a pair of a normal surface over a perfect field of characteristic $p>0$ and an effective $\Q$-divisor $B$ on $X$.
  We prove that Steenbrink-type vanishing holds for $(X,B)$ if it is log canonical and $p>5$, or it is $F$-pure.  
  We also show that rational surface singularities satisfying the vanishing are $F$-injective.
\end{abstract}

\maketitle
\markboth{Tatsuro Kawakami}{Steenbrink-type vanishing}

\section{Introduction}
The study of vanishing theorems related to differential sheaves is important in the analysis of algebraic varieties. 
Greb--Kebekus--Kov\'{a}cs--Peternell established the following vanishing theorem for log canonical (lc, for short) pairs over the field $\C$ of complex numbers, which can be viewed as a variant of Steenbrink vanishing \cite{Steenbrink}.

\begin{thm}[\textup{\cite[Theorem 14.1]{GKKP}}]\label{thm:GKKP}
 Let $(X,B)$ be an lc pair over $\C$ of $d\coloneqq \dim\,X\geq 2$.
 Let $f\colon Y\to X$ be a log resolution of $(X,B)$ with the reduced $f$-exceptional divisor $E$ and $B_Y\coloneqq f^{-1}_{*}\lfloor B\rfloor+E$, where $f_{*}^{-1}\lfloor B\rfloor$ denotes the proper transform of the round-down $\lfloor B\rfloor$.
 Then we have
 \[
 R^{d-1}f_{*}\Omega^i_Y(\log\,B_Y)(-B_Y)=0
 \]
 for all $i\geq 0$.
\end{thm}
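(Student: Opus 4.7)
The plan is to deduce Theorem~\ref{thm:GKKP} by combining Grothendieck--Serre duality with the GKKP extension theorem for reflexive log differentials on lc pairs, reducing ultimately to Steenbrink's classical vanishing on snc models.

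First I would dualize on $Y$. Since $Y$ is smooth of dimension $d$ and $B_Y$ is snc, the wedge product is a perfect pairing
\[
\Omega^i_Y(\log B_Y) \otimes \Omega^{d-i}_Y(\log B_Y) \longrightarrow \Omega^d_Y(\log B_Y) = \omega_Y(B_Y),
\]
so $\Omega^i_Y(\log B_Y)(-B_Y) \cong \sHom_{\sO_Y}(\Omega^{d-i}_Y(\log B_Y), \omega_Y)$. Applying Grothendieck duality for the proper birational morphism $f$ therefore converts the vanishing of $R^{d-1}f_* \Omega^i_Y(\log B_Y)(-B_Y)$ into a depth/reflexivity statement for $Rf_* \Omega^{d-i}_Y(\log B_Y)$ viewed as an object on $X$ dualized against $\omega_X^{\bullet}$. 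Concretely, the question reduces to showing that an $\sExt^{d-1}$ sheaf of $f_* \Omega^{d-i}_Y(\log B_Y)$ against the dualizing complex vanishes, which in turn is equivalent to $f_* \Omega^{d-i}_Y(\log B_Y)$ being reflexive ($S_2$) on $X$.

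Second, I would invoke the Extension Theorem of Greb--Kebekus--Kov\'acs--Peternell (\cite[Theorem 1.4]{GKKP}): for an lc pair $(X,B)$ with a log resolution $f : Y \to X$, the direct image $f_* \Omega^p_Y(\log B_Y)$ is reflexive and coincides with the sheaf $\Omega^{[p]}_X(\log \lfloor B \rfloor)$ of reflexive log differentials on $X$. Feeding this reflexivity into the duality from the previous step immediately kills the offending $\sExt^{d-1}$, yielding the stated vanishing of $R^{d-1}f_* \Omega^i_Y(\log B_Y)(-B_Y)$. The snc base case is then nothing but the fact that $f$ is an isomorphism on the snc locus of $(X,B)$, where there is nothing to prove.

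The hardest input is the Extension Theorem itself, which is the heart of the GKKP program and relies essentially on $X$ being lc rather than only klt. For klt pairs, $X$ has rational (hence Cohen--Macaulay) singularities and Kodaira--Akizuki--Nakano methods suffice; over lc pairs, however, one must control log differentials along log canonical centers of codimension $\geq 2$. The GKKP strategy is to pass to a dlt modification, study forms on the resolved lc centers inductively, and invoke mixed Hodge theory to obtain the crucial degeneration at each step. Transporting this scheme to positive characteristic, as the present paper aims to do, forces one to replace the Hodge-theoretic step either by $F$-singularity techniques (under the $F$-purity hypothesis) or by the recent detailed classification of lc surfaces that becomes available when $p > 5$; this replacement is precisely the main obstacle I would expect to confront.
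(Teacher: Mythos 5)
Theorem~\ref{thm:GKKP} is quoted verbatim from \cite[Theorem 14.1]{GKKP}; the paper does not reprove it and offers no argument for it, using it only as motivation for the positive-characteristic analogue Theorem~\ref{Introthm:local vanishing for F-pure}. There is therefore no ``paper's proof'' to compare your argument against, and I can only assess the proposal on its own terms.

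As written, the duality reduction has a genuine gap. After dualizing you want $H^{-1}$ of $R\sHom_X\bigl(Rf_*\Omega^{d-i}_Y(\log B_Y),\omega_X^{\bullet}\bigr)$, and the hypercohomology spectral sequence $E_2^{p,q}=\sExt^{p}_X\bigl(R^{-q}f_*\Omega^{d-i}_Y(\log B_Y),\omega_X^{\bullet}\bigr)$ feeds into that degree not only through $\sExt^{-1}_X\bigl(f_*\Omega^{d-i}_Y(\log B_Y),\omega_X^{\bullet}\bigr)$, which indeed vanishes precisely when the pushforward is $S_2$, but also through $\sExt^{0}_X\bigl(R^{1}f_*\Omega^{d-i}_Y(\log B_Y),\omega_X^{\bullet}\bigr)$. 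The latter is not zero in general: already for $X=\mathbb{A}^2_{\C}$, $f$ the blow-up of the origin, and $i=0$, one has $R^1f_*\Omega^2_Y(\log E)\neq 0$ by the remark following Theorem~\ref{thm:Kebekus-Schnell} (citing \cite[Appendix, Remark 1.4]{Mustata-Popa}), so that $E_2$ entry is nonzero even though the target $R^1f_*\sO_Y(-E)$ does vanish. So reflexivity of $f_*\Omega^{d-i}_Y(\log B_Y)$ alone does not force the desired vanishing; one must also control $R^1f_*\Omega^{d-i}_Y(\log B_Y)$ or the spectral-sequence differentials. The paper's own Remark~\ref{rem:Vanishing to ext thm} records exactly the one-way implication you would need to reverse: Vanishing (I-1) \emph{implies} the logarithmic extension theorem via the sequence
\[
0 \to H^0(Y,\Omega^1_Y(\log B_Y)) \to H^0(Y\setminus E,\Omega^1_Y(\log B_Y)) \to H^1_E(\Omega^1_Y(\log B_Y)),
\]
but the converse requires in addition that the image of $H^1_E$ in $H^1(Y,\Omega^1_Y(\log B_Y))$ vanish, which is a separate Steenbrink-type input. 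Finally, note that in \cite{GKKP} Theorem~14.1 is established \emph{before} and used \emph{in} the proof of the Extension Theorem, so deriving it from the Extension Theorem would be circular within that framework.
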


In Theorem \ref{thm:GKKP}, $\Omega^i_Y(\log\,B_Y)$ denotes the sheaf of $i$-th logarithmic diﬀerential forms, and $\Omega^i_Y(\log\,B_Y)(-B_Y)$ denotes $\Omega^i_Y(\log\,B_Y)\otimes \sO_Y(-B_Y)$.
We refer to Subsection \ref{subsection:Notation and terminologies} for details.

The aim of this paper is to prove a positive characteristic analog of Theorem \ref{thm:GKKP} in dimension two. Before stating the main theorem, we briefly recall \textit{$F$-pure singularities}, which can be seen as a positive characteristic analog of lc singularities in characteristic zero. 
Let $(X,B)$ be a pair of a normal variety $X$ over a perfect field of characteristic $p>0$ and an effective $\Q$-divisor $B$.
We say $(X,B)$ is \textit{$F$-pure} if a map 
\[
\sO_{X,x}\xrightarrow{F^e} F^e_{*}\sO_{X,x} \hookrightarrow F^e_{*}\sO_{X}((p^e-1)B)_{x}
\] 
splits as an $\sO_{X,x}$-module homomorphism for all $e>0$ and for every closed point $x\in X$.
Here, the first map is the $e$-times iteration of the Frobenius map $F$, and the second map is obtained by the pushforward of a natural inclusion $\sO_{X,x}\hookrightarrow \sO_{X}((p^e-1)B)_{x}$ by $F^e$.
Hara--Watanabe \cite[Theorem 3.3]{Hara-Watanabe} proved that if $(X,B)$ is $F$-pure and $K_X+B$ is $\Q$-Cartier, then it is lc.
Moreover, it is expected that the modulo $p$-reduction of lc pairs in characteristic zero are $F$-pure for a dense set of primes $p$ \cite[Conjecture 4.8]{Takagi-Watanabe}.

The following theorem states that the vanishing as in Theorem \ref{thm:GKKP} holds when $(X,B)$ is lc and $p>5$, or $(X,B)$ is $F$-pure.

\begin{theo}\label{Introthm:local vanishing for F-pure}
Let $k$ be a perfect field of characteristic $p>0$. 
Let $(X,B)$ be a pair of a normal surface $X$ over $k$ and an effective $\Q$-divisor $B$ on $X$.
Suppose that one of the following holds:
\begin{enumerate}
    \item[\textup{(1)}] $(X,B)$ is lc and $p>5$.
    \item[\textup{(2)}] $(X,B)$ is $F$-pure.
\end{enumerate}
Let $f\colon Y\to X$ be a log resolution with the reduced $f$-exceptional divisor $E$ and $B_Y\coloneqq f^{-1}_{*}\lfloor B\rfloor+E$.
Then we have
 \[
 R^{1}f_{*}\Omega^i_Y(\log\,B_Y)(-B_Y)=0
 \]
for all $i\geq 0$.
\end{theo}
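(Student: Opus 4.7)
The plan is to split the proof by $i$, noting that $\Omega^i_Y(\log B_Y)=0$ for $i\geq 3$ since $\dim Y=2$, so only $i=0,1,2$ need attention. For $i=2$, the snc divisor $B_Y$ yields $\Omega^2_Y(\log B_Y)\cong\omega_Y(B_Y)$, so $\Omega^2_Y(\log B_Y)(-B_Y)\cong\omega_Y$, and the vanishing $R^1f_*\omega_Y=0$ is Grauert--Riemenschneider, known for surfaces over any perfect field (Lipman). For $i=0$, I would write $-B_Y\sim_{\Q,f} K_Y+D$ with $D$ an $f$-nef and $f$-big $\Q$-divisor whose fractional part encodes the lc (or $F$-pure) control on the discrepancies appearing in $f^*(K_X+B)$, and invoke the relative Kawamata--Viehweg type vanishing available for surfaces when $p>5$ (as in Tanaka's surface vanishing results) or when the pair is Frobenius split.

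The main case is $i=1$. Here the plan is to use the residue short exact sequence
$$0\to\Omega^1_Y(-B_Y)\to\Omega^1_Y(\log B_Y)(-B_Y)\xrightarrow{\,\mathrm{res}\,}\bigoplus_j(\iota_j)_*\sO_{B_{Y,j}}(-B_Y)\to 0$$
attached to the decomposition $B_Y=\sum_j B_{Y,j}$ into smooth components. Pushing forward by $f$ and using the long exact sequence, the problem reduces to two subgoals: (i) $R^1f_*\Omega^1_Y(-B_Y)=0$, and (ii) control of $R^1f_*$ of the right-hand direct sum. For (ii), each $B_{Y,j}$ is a smooth projective curve over $k$, so this is a Riemann--Roch computation with a line bundle of degree $-B_Y\cdot B_{Y,j}$; under the lc or $F$-pure hypothesis, the dual graph of the exceptional fibre is constrained enough that these degrees are favorable, and the residues coming from $f_*\Omega^1_Y(\log B_Y)(-B_Y)$ account for the possible obstructions.

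The core difficulty is (i), an Akizuki--Nakano-type vanishing in positive characteristic. In case (2), my plan is to exploit the Frobenius splitting of $(X,B)$ and the Cartier operator $C\colon F_*\Omega^\bullet_Y(\log B_Y)\to\Omega^\bullet_Y(\log B_Y)$ on $(Y,B_Y)$ to realise $R^1f_*\Omega^1_Y(-B_Y)$ as a direct summand of $R^1f_*\Omega^1_Y(-p^n B_Y)$ for all $n$, and then apply Fujita-type vanishing after sufficiently many Frobenius iterates. In case (1), I would reduce to case (2) using that lc surface singularities in characteristic $p>5$ are known to be $F$-pure (essentially by classification, after the work of Hara), possibly after a small perturbation of the boundary to preserve $F$-purity of the pair.

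The main obstacle is precisely (i): arranging that the Frobenius splitting of $(X,B)$ interacts correctly with the log structure on the log resolution $Y$, that the Cartier operator is compatible with the residue filtration, and that the resulting splitting in cohomology survives the iteration needed to push the exponent of $B_Y$ to infinity. Once the $\Omega^1$ vanishing is in hand, the remaining pieces follow formally from the residue sequence together with Riemann--Roch on the components of $B_Y$.
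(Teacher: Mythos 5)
Your proposal diverges from the paper's and has several gaps, the most serious of which would derail the argument in the central case $i=1$.

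First, subgoal (i), the claimed vanishing $R^1f_*\Omega^1_Y(-B_Y)=0$, is false in general, already in the simplest setting: take $X$ smooth and $B$ a curve with a single node at $x$, and let $f$ be the blow-up at $x$. Then $B_Y=\widetilde B_1+\widetilde B_2+E$ with $B_Y\cdot E=1$, so $\sO_Y(-B_Y)|_E\cong\sO_{\PP^1}(-1)$ and the conormal/cotangent sequence gives $0\to\sO_{\PP^1}\to\Omega^1_Y(-B_Y)|_E\to\sO_{\PP^1}(-3)\to 0$, which already has nonzero $H^1$. In your residue long exact sequence, $R^1f_*$ of the right-hand term does vanish (the strict transforms are finite over $X$ and $E\cong\PP^1$ with a degree $-1$ bundle), so the vanishing of the middle term does \emph{not} reduce to the vanishing of the left term; it is equivalent to the surjectivity of the connecting map $f_*\bigl(\bigoplus_j\sO_{B_{Y,j}}(-B_Y)\bigr)\to R^1f_*\Omega^1_Y(-B_Y)$. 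That surjectivity is exactly the content of the theorem and is not addressed. The residue sequence therefore does not localize the difficulty; it just relabels it.

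Second, the proposed reduction of case (1) to case (2) via ``lc surface singularities are $F$-pure for $p>5$'' is false, and the paper explicitly notes this: a rational singularity of type $(3,3,3)$ is lc but not $F$-pure for $p\equiv 2\pmod 3$ (so for arbitrarily large $p$), and cusp or simple elliptic singularities over supersingular elliptic curves are lc but not $F$-pure (indeed not even $F$-injective). Hara's classification gives $F$-regularity for \emph{klt} surface singularities with $p>5$, not for lc ones. So cases (1) and (2) genuinely must be handled separately, and neither reduces to the other.

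Third, the Frobenius-iteration mechanism you sketch does not have the right shape. Frobenius splitting of $(X,B)$ splits $\sO_X\to F_*\sO_X(\lceil(p-1)B\rceil)$; it does not produce a splitting of the pullback $\Omega^1_Y\to F_*\Omega^1_Y(\cdots)$, and the logarithmic Cartier operator $C\colon Z^1\to\Omega^1$ goes in the other direction and is not a retraction of the Frobenius pullback of $1$-forms. There is no natural way to realize $R^1f_*\Omega^1_Y(-B_Y)$ as a direct summand of $R^1f_*\Omega^1_Y(-p^nB_Y)$ for $n\gg0$. The splitting that \emph{does} exist, and is the engine of the paper's proof, comes from Totaro's lemma for tame (degree prime to $p$) covers: for such a cover $g$, $\Omega^i_Y(\log B_Y)(-B_Y)\to g_*\Omega^i_{Y'}(\log B_{Y'})(-B_{Y'})$ splits. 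The paper uses this to reduce Vanishing (I-1) along Galois covers of degree prime to $p$, then invokes the classification of lc/$F$-pure surface pairs to reduce to three residual cases after such a cover: a toric pair (handled by a second application of Totaro's splitting applied to the multiplication map on a toric variety, a variant of Fujino's vanishing), an $F$-pure RDP with $B=0$ (handled by Hirokado's explicit classification of $H^1_E(T_Y(-\log E)(E))$), and a Gorenstein lc singularity with $B=0$ (handled by Wahl's vanishing for simple elliptic and cusp singularities). Without the tame-cover splitting and the case reduction, the argument does not go through.

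A smaller point: your $i=0$ argument gestures at relative Kawamata--Viehweg vanishing for surfaces, which is available, but the paper does not simply set $-B_Y\sim_{\Q,f}K_Y+D$ with $D$ $f$-nef and $f$-big; in the non-plt cases the relevant twist is not immediately of that form, and the paper's Proposition on $i=0$ proceeds by a case analysis (rational, simple elliptic/cusp, dihedral), using Koll\'ar's relative vanishing in a carefully chosen way together with connectedness of the exceptional fiber. The $i=2$ case (Grauert--Riemenschneider) is the only piece of your proposal that matches the paper as stated.
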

\begin{rem}\label{Introrem}\,
\begin{enumerate}
    \item 
    The crucial statement is the vanishing for $i=1$.
    The case $i=0$ holds for lc surface pairs in all characteristics (Proposition \ref{prop:i=0}).
    The case $i=2$ is nothing but Grauert--Riemenschneider vanishing, which is well-known to hold for normal surfaces in all characteristics.
    \item The vanishing $R^1f_{*}\Omega^1_Y(\log\,B_Y)(-B_Y)=0$ implies the logarithmic extension theorem
    $f_{*}\Omega^1_Y(\log,B_Y)=\Omega^{[1]}_X(\log\,B)$ 
    (see Remark \ref{rem:Vanishing to ext thm} for details).
    Therefore, Theorem \ref{Introthm:local vanishing for F-pure} gives a generalization to Graf's result \cite[Theorem 1.2]{Gra}.
    We remark that our proof is different from Graf's method; we use a splitting of the pullback map of sheaves of differential forms instead of the minimal model program.
\end{enumerate}
\end{rem}

Next, we observe a necessary condition for Theorem \ref{Introthm:local vanishing for F-pure}.
Specifically, in the next theorem, we will see that if $X$ is a normal surface with only rational singularities, the vanishing $R^1f_{*}\Omega^1_Y(\log\,E)(-E)=0$ implies that $X$ is $F$-injective (see Definition \ref{def:F-inj}),
where $f\colon Y\to X$ is a log resolution with the reduced $f$-exceptional divisor $E$.
We recall that $F$-injectivity is a weaker notion than $F$-purity, though they are equivalent when the canonical divisor $K_X$ is Cartier (cf.~\cite[Proposition 3.11]{Takagi-Watanabe}).
Thus, the assumption (2) in Theorem \ref{Introthm:local vanishing for F-pure} is essential.

There exist non-$F$-pure rational double points (RDPs, for short) in each characteristic $p=2$, $3$, and $5$ (see \cite[Table 1]{Kawakami-Takamatsu} for a list of non-$F$-pure RDPs). Therefore, the assumption that $p>5$ in Theorem \ref{Introthm:local vanishing for F-pure} (1) is optimal.

\begin{theo}\label{Introthm:Local vanishing to F-inj}
Let $X$ be a normal surface over a perfect field of positive characteristic and $f\colon Y\to X$ a log resolution with the reduced $f$-exceptional divisor $E$.
Suppose that
\[
R^1f_{*}\Omega^1_Y(\log\,E)(-E)=0.
\]
If $X$ is rational, then $X$ is $F$-injective.
\end{theo}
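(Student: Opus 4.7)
My plan is to reduce $F$-injectivity to the surjectivity of a Grothendieck trace via local duality, and then to leverage the hypothesis through a twisted logarithmic Cartier complex on the resolution. Since rational surface singularities are Cohen--Macaulay, at every closed point $x \in X$ the local cohomology $H^i_{\mathfrak{m}_x}(\mathcal{O}_{X,x})$ vanishes for $i < 2$, so $F$-injectivity at $x$ is equivalent to injectivity of Frobenius on $H^2_{\mathfrak{m}_x}(\mathcal{O}_{X,x})$, which by local duality translates into surjectivity of the Grothendieck trace $\mathrm{Tr}_F \colon F_*\omega_X \to \omega_X$. Using Grauert--Riemenschneider vanishing (valid for surfaces in positive characteristic by Lipman) $R^1 f_*\omega_Y = 0$ together with $f_*\omega_Y = \omega_X$, I would push forward the Cartier exact sequence
\[
0 \to F_*B^2_Y \to F_*\omega_Y \xrightarrow{C_Y} \omega_Y \to 0, \qquad B^2_Y := d\Omega^1_Y,
\]
on the smooth surface $Y$ and identify the cokernel of $\mathrm{Tr}_F$ with $F_*R^1 f_*B^2_Y$. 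The task therefore reduces to proving $R^1 f_*B^2_Y = 0$.

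To bring in the hypothesis, I would introduce the complex
\[
\mathcal{F}^\bullet \colon F_*\mathcal{O}_Y(-pE) \xrightarrow{d} F_*\Omega^1_Y(\log E)(-pE) \xrightarrow{d} F_*\omega_Y(-(p-1)E),
\]
obtained as the tensor product of the Frobenius pushforward of the logarithmic de Rham complex of $(Y,E)$ with $\mathcal{O}_Y(-E)$ via the projection formula. The logarithmic Cartier isomorphism, twisted by $\mathcal{O}_Y(-E)$, identifies the cohomology sheaves as $\mathcal{H}^0(\mathcal{F}^\bullet) = \mathcal{O}_Y(-E)$, $\mathcal{H}^1(\mathcal{F}^\bullet) = \Omega^1_Y(\log E)(-E)$, and $\mathcal{H}^2(\mathcal{F}^\bullet) = \omega_Y$. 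For each of these $R^1 f_*$ vanishes: for $\mathcal{O}_Y(-E)$ by rationality (connectedness of $E$ and $H^1(E,\mathcal{O}_E) = 0$), for $\Omega^1_Y(\log E)(-E)$ by the hypothesis, and for $\omega_Y$ by Grauert--Riemenschneider. The hypercohomology spectral sequence $E_2^{p,q} = R^q f_*\mathcal{H}^p(\mathcal{F}^\bullet) \Rightarrow \mathbb{R}^{p+q} f_*\mathcal{F}^\bullet$ thus degenerates, yielding $\mathbb{R}^n f_*\mathcal{F}^\bullet = f_*\mathcal{H}^n$ for every $n$, and in particular $\mathbb{R}^2 f_*\mathcal{F}^\bullet = \omega_X$. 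Breaking $\mathcal{F}^\bullet$ into the four short exact sequences at the kernels and images of the two differentials, and chasing through $Rf_*$ with these vanishings together with the rationality consequence $R^1 f_*F_*B^1_Y = 0$, I would extract surjectivity of the composite $f_*F_*\omega_Y(-(p-1)E) \to \omega_X$. Since this composite factors through $F_*\omega_X = f_*F_*\omega_Y$ via the inclusion $\omega_Y(-(p-1)E) \hookrightarrow \omega_Y$ and coincides with the restriction of $\mathrm{Tr}_F$ thereon, its surjectivity forces surjectivity of $\mathrm{Tr}_F$, giving $F$-injectivity.

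The main obstacle is the passage between the log-twisted $B^2_Y(\log E)(-pE)$ appearing in $\mathcal{F}^\bullet$ and the untwisted $B^2_Y$ needed in the first reduction. The key technical observation is that the twisted Cartier-type operator $F_*\omega_Y(-(p-1)E) \to \omega_Y$ is simply the restriction of the usual Cartier map $F_*\omega_Y \to \omega_Y$, so that surjectivity of the restricted map forces surjectivity of $\mathrm{Tr}_F$ and the desired $R^1 f_*B^2_Y = 0$ follows from running the first reduction in reverse. A secondary subtlety is that $R^1 f_*$ of the individual terms $F_*\Omega^1_Y(\log E)(-pE)$ and $F_*\omega_Y(-(p-1)E)$ may well be nonzero, so the spectral-sequence degeneration above, rather than termwise vanishing, is what delivers the crucial identification $\mathbb{R}^2 f_*\mathcal{F}^\bullet = \omega_X$ that drives the conclusion.
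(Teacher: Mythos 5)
Your strategy is similar in spirit to the paper's --- reduce $F$-injectivity to surjectivity of $\mathrm{Tr}_F \colon F_*\omega_X \to \omega_X$, then use a twisted logarithmic Cartier exact sequence on $Y$ to kill the cokernel --- but there is a genuine gap in the way you try to bring in the hypothesis, and the fix is to use a different twist.

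\textbf{The gap.} Your complex $\mathcal{F}^\bullet$ is $F_*\Omega^\bullet_Y(\log E)\otimes\sO_Y(-E)$, whose terms (by the projection formula) are $F_*\Omega^i_Y(\log E)(-pE)$, not $F_*\Omega^i_Y(\log E)(-E)$. The hypothesis controls $R^1 f_*$ of the latter, not the former, and indeed $R^1 f_*\Omega^1_Y(\log E)(-pE)$ can be nonzero. You try to get around this by first showing $\mathbb{R}^2 f_*\mathcal{F}^\bullet \cong f_*\mathcal{H}^2 = \omega_X$ (via the second hypercohomology spectral sequence, which is fine) and then claiming the first spectral sequence forces $f_*\mathcal{F}^2 \to \omega_X$ to be surjective. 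But that surjectivity is equivalent to ${}^{I}E_\infty^{1,1}=0$, and a direct diagram chase through your four short exact sequences shows that ${}^{I}E_\infty^{1,1}={}^{I}E_2^{1,1}$ vanishes \emph{if and only if} the connecting map $\omega_X \to R^1 f_* B^2$ vanishes --- i.e.\ if and only if $f_*\mathcal{F}^2 \to \omega_X$ is surjective. So this step is circular; the degeneration of the second spectral sequence does not ``drive the conclusion.'' Relatedly, the vanishing $R^1 f_*F_*B^1_Y = 0$ you invoke as a ``rationality consequence'' holds for the \emph{untwisted} $B^1_Y = d\sO_Y$ (via $0 \to \sO_Y \to F_*\sO_Y \to F_*B^1_Y \to 0$), but the object actually appearing in your complex is a $(-pE)$-twisted $B^1$, isomorphic to $R^1 f_*\sO_Y(-pE)$, which is nonzero already for an $A_1$-singularity with $p=2$. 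These are not the same sheaf, and conflating them is another symptom of having chosen the wrong twist.

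\textbf{The fix (the paper's route).} Instead of tensoring $F_*\Omega^\bullet_Y(\log E)$ with $\sO_Y(-E)$, use Hara's generalized Cartier operator with the $\Q$-divisor $D = -\tfrac{1}{p}E$. Then $\lfloor pD\rfloor = \lfloor D\rfloor = -E$ and $D' = (p-1)E$, so the de Rham complex upstairs is $\Omega^\bullet_Y(\log E)((p-1)E)$, and after $F_*$ and tensoring with $\sO_Y(-E)$ the terms become $F_*\Omega^i_Y(\log E)(-E)$ --- precisely the twist the hypothesis speaks about. Sequence \eqref{log smooth 1} at $i=1$ then gives $R^1 f_*B^2_Y((\log E)(-E)) = 0$ directly (using $R^1 f_*\Omega^1_Y(\log E)(-E)=0$ by hypothesis and $R^2 f_* = 0$ by fiber dimension), and sequence \eqref{log smooth 2} at $i=2$ identifies $B^2_Y((\log E)(-E))$ as the kernel of the \emph{untwisted} trace $F_*\omega_Y \to \omega_Y$, since $\Omega^2_Y(\log E)(-E) = \omega_Y$ and $\Omega^2_Y(\log E)(-\tfrac{1}{p}E) = \omega_Y$. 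Pushing forward and using $f_*\omega_Y = \omega_X$ (rationality) finishes the proof. No spectral sequence is needed. The essential idea you are missing is that the $\Q$-twist $D = -\tfrac{1}{p}E$ (rather than the integral twist $D=-E$) is what makes the middle term of the Cartier sequence agree with the sheaf in the hypothesis while simultaneously making the top Cartier map untwisted.

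Your reduction of $F$-injectivity to surjectivity of the trace and the final observation that surjectivity of the restriction $f_*F_*\omega_Y(-(p-1)E)\to\omega_X$ would force surjectivity of $\mathrm{Tr}_F$ are both correct; the problem is purely in the middle, where you cannot actually establish that surjectivity from your setup.
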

\begin{rem}\,
\begin{enumerate} 
    \item The affine cone of a supersingular elliptic curve satisfies the above vanishing (Proposition \ref{prop:Gor lc}), but is not $F$-injective. 
    Therefore, the rationality assumption is essential.
    \item We cannot expect $X$ to be $F$-pure. For example, the modulo $p$-reduction of a rational singularity whose graph is of type $(3,3,3)$ is not $F$-pure for $p$ satisfying $p\equiv 2$ modulo $3$ \cite[Theorem 1.2]{Hara(two-dim)}, but satisfies the vanishing for sufficiently large $p$.
    \item 
    We cannot expect $X$ to be $F$-rational. For example, an RDP of type $E_8^1$ in characteristic $5$ is not $F$-rational \cite[Table 1]{Kawakami-Takamatsu}, but satisfies the vanishing since it is $F$-pure (Proposition \ref{prop:canonical}). 
    We refer to \cite[Section 2 and Section 3]{Takagi-Watanabe} for the definition and basic properties of $F$-rationality. 
\end{enumerate}
\end{rem}

For the vanishing for $i=1$ in Theorem \ref{Introthm:local vanishing for F-pure}, we first show that the vanishing can be reduced to finite Galois covers degree prime to $p$ (Proposition \ref{prop:reduction to Galois cover}).
The key ingredient here is a splitting of the pullback map of the sheaves of differential forms (\cite{Kawakami-Totaro},\cite{Totaro(logBott)}).
Then, taking suitable finite Galois covers (which are not necessarily index one covers), Theorem \ref{Introthm:local vanishing for F-pure} can be reduced to the case where $(X,B)$ is toric, or where $X$ is Gorenstein and $B=0$. 
In the second case, the vanishing is obtained by combining the results of Hirokado \cite{Hirokado} and Wahl \cite{Wahl}.
In the first case, we show a variant of Fujino's vanishing \cite[Theorem 5.2]{Fujino} using a splitting of the pullback map of sheaves of logarithmic differential forms again (Proposition \ref{prop:toric cases}).

The proof of Theorem \ref{Introthm:Local vanishing to F-inj} is a simple argument utilizing Cartier operators (Subsection \ref{sebsec:Hara Cartier operator}), and does not require explicit calculations or classifications of singularities.

Finally, we consider another variant of Steenbrink vanishing: 

\begin{thm}[\textup{\cite[Theorem 1.9]{KS21}}]\label{thm:Kebekus-Schnell}
    Let $X$ be a variety over $\C$ of $\dim\,d\geq 2$. 
    Let $f\colon Y\to X$ be a log resolution with the reduced $f$-exceptional divisor $E$.
    If $R^{d-1}f_{*}\sO_Y=0$, then we have
    \[
    R^{d-1}f_{*}\Omega_Y^{1}(\log\,E)=0.
    \]
\end{thm}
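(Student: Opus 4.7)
My plan is to reduce the desired vanishing to two statements via the Poincar\'e residue sequence on $(Y,E)$, handle the first via the hypothesis, and invoke mixed Hodge theory for the second.

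I would start by applying $Rf_*$ to the residue sequence
\[
0 \to \Omega^1_Y \to \Omega^1_Y(\log E) \to \bigoplus_i \sO_{E_i} \to 0
\]
attached to the SNC decomposition $E=\sum_i E_i$. Since $f$ is birational, every fiber has dimension at most $d-1$, so $R^df_*=0$ on coherent sheaves, and the tail of the long exact sequence collapses to
\[
R^{d-2}f_*\!\bigoplus_i\sO_{E_i} \xrightarrow{\ \delta\ } R^{d-1}f_*\Omega^1_Y \to R^{d-1}f_*\Omega^1_Y(\log E) \to R^{d-1}f_*\!\bigoplus_i\sO_{E_i} \to 0.
\]
Thus the conclusion reduces to \textbf{(i)} $R^{d-1}f_*\bigoplus_i\sO_{E_i}=0$ and \textbf{(ii)} surjectivity of the connecting map $\delta$.

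For \textbf{(i)} I would chain the structure sequence $0\to\sO_Y(-E)\to\sO_Y\to\sO_{E_{\mathrm{red}}}\to 0$ with the normalization sequence $0\to\sO_{E_{\mathrm{red}}}\to\bigoplus_i\sO_{E_i}\to\mathcal{Q}\to 0$, where $\mathcal{Q}$ is supported on $\bigcup_{i<j}(E_i\cap E_j)$, a locus of dimension at most $d-2$; in particular $R^{d-1}f_*\mathcal{Q}=0$. The first sequence, together with the hypothesis $R^{d-1}f_*\sO_Y=0$ and $R^df_*=0$, forces $R^{d-1}f_*\sO_{E_{\mathrm{red}}}=0$, and the second sequence then yields (i).

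The hard part is \textbf{(ii)}. Via the $d\log$ map, the class $1\in\sO_{E_i}$ lifts locally to $df_i/f_i$, and the \v Cech obstruction recovers the Chern class $c_1(\sO_Y(E_i))\in R^1f_*\Omega^1_Y$; globally $\delta$ is cup/Yoneda product with these Chern classes, so (ii) is the statement that every class in $R^{d-1}f_*\Omega^1_Y$ is generated by the Chern classes of the exceptional components. In characteristic zero the cleanest route is Hodge-theoretic: Deligne's comparison $\Omega^\bullet_Y(\log E)\simeq Rj_*\C_U$ for $U=Y\setminus E$ and the $E_1$-degeneration of the relative logarithmic Hodge-to-de Rham spectral sequence for $f$ identify $R^{d-1}f_*\Omega^1_Y(\log E)$ with an $F^1$-graded piece of $R^\bullet(f\circ j)_*\C_U\simeq R^\bullet j'_*\C_{X^{\mathrm{sm}}}$, and the hypothesis $R^{d-1}f_*\sO_Y=0$ controls the $F^0$-piece in the same total degree, forcing the surviving contribution to come entirely from exceptional Chern classes. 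Equivalently, one can apply Saito's decomposition theorem to $Rf_*\Q_Y^H[d]$ and read off that the non-$\mathrm{IC}_X$ top-degree summands are concentrated on the exceptional image of $\delta$. This Hodge-theoretic input is where the characteristic-zero assumption is essential, and is exactly what the positive-characteristic arguments of the present paper must replace.
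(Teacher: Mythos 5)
The paper does not prove Theorem~\ref{thm:Kebekus-Schnell}; it is quoted as a known result from \cite{KS21}, so there is no in-paper proof to compare against.

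Your reduction is sound as far as it goes. Applying $Rf_*$ to the residue sequence and using that fibers have dimension $\leq d-1$ (so $R^d f_*=0$ on coherents) is correct, and your treatment of (i) works: $R^{d-1}f_*\sO_Y = 0 = R^{d}f_*\sO_Y(-E)$ kills $R^{d-1}f_*\sO_E$, and the cokernel $\mathcal{Q}$ of $\sO_E\to\bigoplus_i\sO_{E_i}$ embeds into $\bigoplus_{i<j}\sO_{E_i\cap E_j}$ (Mayer--Vietoris resolution of a snc scheme), so it is supported in dimension $\leq d-2$ and $R^{d-1}f_*\mathcal{Q}=0$. It is also worth noting that your reduction of the statement to the surjectivity of the connecting map $\delta$ (i.e.\ to the exceptional Chern classes generating $R^{d-1}f_*\Omega^1_Y$) is precisely the reformulation used elsewhere in this paper, in the proof of Theorem~\ref{Introthm:tame quotient}, where in positive characteristic it is verified via the intersection matrix.

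The genuine gap is (ii). You have reduced the theorem to the statement that $\delta\colon R^{d-2}f_*\bigoplus_i\sO_{E_i}\to R^{d-1}f_*\Omega^1_Y$ is onto, but this is essentially a restatement of what needs to be proved, and the paragraph meant to establish it is a plausibility argument rather than a proof. The invocations of Deligne's comparison, $E_1$-degeneration of the relative logarithmic Hodge-to-de\,Rham spectral sequence, and Saito's decomposition theorem are the right tools in spirit (this is Hodge-module territory, and the actual proof in \cite{KS21} goes through the Du Bois complex and strictness of the Hodge filtration), but ``the hypothesis controls the $F^0$-piece, forcing the surviving contribution to come from exceptional Chern classes'' is not a deduction that follows from what you wrote. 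For instance: $E_1$-degeneration of the logarithmic Hodge-to-de\,Rham spectral sequence for a \emph{non-proper} morphism $f$ is itself a nontrivial fact requiring mixed-Hodge-module input; the decomposition of $Rf_*\Q^H_Y[d]$ involves summands supported on all of $f(E)$, not only the ones you want to discard; and the hypothesis $R^{d-1}f_*\sO_Y=0$ constrains only one graded piece, and you need a mechanism (strictness of $F$ on the relevant morphism of Hodge modules) that transports this constraint to the $F^1$-piece. Until that mechanism is made explicit, the sketch only shows that (ii) is where the work happens, not that it is true.
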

\begin{rem}
    Even if $X=\mathbb{A}_{\C}^2$ and $f$ is the blow-up at origin,  we have
    \[
    R^{1}f_{*}\Omega^{2}_X(\log\,E)\neq 0,
    \]
    as proven in \cite[Appendix, Remark 1.4]{Mustata-Popa}.
\end{rem}

The vanishing theorem mentioned above was initially conjectured by Musta\c{t}\u{a}--Olano--Popa \cite{Mustata-Olano-Popa} for rational singularities over $\C$ and was solved in isolated singularity and toric cases. The complete solution of this conjecture was provided by Kebekus--Schnell \cite[Theorem 1.9]{KS21}, as stated in Theorem \ref{thm:Kebekus-Schnell}.

Unfortunately, the above vanishing hardly holds in positive characteristic.
Indeed, we can observe that the vanishing can fail even for toric surfaces;
the above vanishing implies the surjectivity of the restriction map $f_{*}\Omega^1_Y\hookrightarrow \Omega^{[1]}_X$ (Remark \ref{rem:Vanishing to ext thm}), but it is known that this surjectivity is not always true for toric surfaces in positive characteristic (\cite[Proposition 1.5]{Langer19} or \cite[Example 10.2]{Gra}). 
As a corollary of Theorem \ref{Introthm:local vanishing for F-pure}, we obtain some sufficient conditions for the vanishing:

\begin{theo}\label{Introthm:tame quotient}
Let $X$ be a normal surface over an algebraically closed field of characteristic $p>0$ and 
$f\colon Y\to X$ a log resolution with the reduced $f$-exceptional divisor $E$.
Suppose that one of the following holds:
\begin{enumerate}
    \item[\textup{(1)}] $X$ has only quotient singularities by finite group schemes order prime to $p$.
    \item[\textup{(2)}] $X$ is rational, lc, $p>5$, and the determinant of the intersection matrix of $E$ is prime to $p$.
    \item[\textup{(3)}] $X$ is rational, $F$-pure, and the determinant of the intersection matrix of $E$ is prime to $p$.
\end{enumerate}
Then we have
$R^{1}f_{*}\Omega^1_Y(\log\,E)=0$.
\end{theo}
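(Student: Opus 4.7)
The plan is to deduce the vanishing from Theorem \ref{Introthm:local vanishing for F-pure} via the Poincar\'e residue sequence, using the determinant hypothesis to ensure the surjectivity of a connecting homomorphism.

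First I would reduce case (1) to case (3). A finite group scheme of order prime to $p$ is \'etale and linearly reductive, so the Reynolds operator splits the inclusion $\sO_X\hookrightarrow \sO_{\tilde X}$, making $X$ a direct summand of a regular ring and hence $F$-pure and rational. The determinant of the intersection matrix of a log resolution of $X$ equals the order of $H_1$ of the link of the singularity, which embeds into $G$ and is therefore prime to $p$. So (1) is a special case of (3), and it suffices to treat (2) and (3).

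For these cases, apply Theorem \ref{Introthm:local vanishing for F-pure} with $B=0$ to obtain
\[
R^1f_*\Omega^1_Y(\log E)(-E)=0.
\]
Then apply $Rf_*$ to the Poincar\'e residue sequence
\[
0\to \Omega^1_Y \to \Omega^1_Y(\log E)\to \bigoplus_{i=1}^r \iota_{i*}\sO_{E_i}\to 0.
\]
Rationality of $X$ forces each $E_i\cong \mathbb{P}^1$, so $R^1f_*\iota_{i*}\sO_{E_i}=H^1(E_i,\sO_{E_i})=0$. This yields an exact sequence
\[
k^r \xrightarrow{\delta} R^1f_*\Omega^1_Y \to R^1f_*\Omega^1_Y(\log E)\to 0,
\]
and the problem reduces to showing that $\delta$ is surjective.

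Two further ingredients are needed: (a) $\dim_k R^1f_*\Omega^1_Y=r$; and (b) with respect to a suitable basis of the target, $\delta$ is represented by the intersection matrix $(E_i\cdot E_j)$, up to sign. For (a) I would combine the exact sequence $0\to \Omega^1_Y(\log E)(-E)\to \Omega^1_Y\to Q\to 0$, with $Q$ the torsion quotient supported on $E$, with the vanishing just obtained, reducing the computation to $H^1(E,Q)$, which is evaluated using that $E$ is a tree of rational curves. For (b), $\delta$ sends the $i$-th generator to the cycle class $c_1(\sO_Y(E_i))$, and a local duality pairing against the basis dual to $\{[E_i]\}$ converts this into the intersection matrix. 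Granted (a) and (b), the determinant hypothesis makes $\delta$ invertible over $k$, hence surjective.

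The main obstacle is step (b) above: identifying the connecting map as the intersection matrix, compatibly with the basis chosen in (a), requires a careful analysis of local Grothendieck duality and the Poincar\'e residue in positive characteristic.
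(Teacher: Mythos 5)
Your handling of cases (2) and (3) follows the same route as the paper: reduce via the residue sequence (and rationality, giving $E_i\cong\mathbb{P}^1$) to the surjectivity of the connecting map $\alpha\colon k^r\to R^1f_*\Omega^1_Y$, then use Theorem~\ref{Introthm:local vanishing for F-pure} together with the determinant hypothesis. The paper phrases the finish slightly differently: it introduces the restriction map $\beta\colon H^1(Y,\Omega^1_Y)\to\bigoplus_i H^1(E_i,\Omega^1_{E_i})$, observes that $\beta\circ\alpha$ is exactly the intersection matrix (citing \cite[Section~8.C]{Gra} for this — your ``main obstacle'' in step (b) is already recorded there), and shows that $\beta$ is an isomorphism by the argument of \cite[Section~5]{Mustata-Olano-Popa} using the vanishing from Theorem~\ref{Introthm:local vanishing for F-pure}. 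Your version, counting dimensions via the exact sequence $0\to\Omega^1_Y(\log E)(-E)\to\Omega^1_Y\to\Omega^1_E\to0$ and the two vanishings $R^1f_*\Omega^1_Y(\log E)(-E)=0$ and $R^2f_*=0$ to get $R^1f_*\Omega^1_Y\cong R^1f_*\Omega^1_E\cong k^r$, and then using injectivity of $\alpha$ from the determinant hypothesis, is logically equivalent but a little cleaner to phrase than appealing to ``a suitable basis'' and ``the basis dual to $\{[E_i]\}$'': the natural formulation is precisely $\beta\circ\alpha=(E_i\cdot E_j)_{i,j}$.

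For case (1) your route differs from the paper's, and this is where I'd push back. The paper handles (1) directly and uniformly via Proposition~\ref{prop:reduction to Galois cover}: a prime-to-$p$-order quotient singularity is by definition dominated by a Galois cover of degree prime to $p$ from a smooth germ, and Vanishing~(II) is trivial on the smooth germ, so the reduction lemma closes the case immediately — no rationality, no intersection matrix, no determinant. You instead try to subsume (1) into (3), which forces you to verify two extra statements: that such a quotient singularity is rational and $F$-pure (fine, it's a direct summand of a regular local ring), and that $\det(E_i\cdot E_j)$ is prime to $p$. The second claim is true but not for the reason you give: the local class group of $k\llbracket x,y\rrbracket^G$ (which has order $|\det(E_i\cdot E_j)|$ for a rational singularity) is a \emph{quotient} of the character group of $G$, not a subgroup of $G$, and ``$H_1$ of the link'' is not the right language in characteristic $p$ — one should argue with the divisor class group or the abelianized local \'etale fundamental group. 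So your reduction is repairable, but it is strictly more work than the paper's one-line application of the Galois-cover reduction that was built precisely for this purpose.

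One last small gap: you assert $H^1(E,Q)=k^r$ ``using that $E$ is a tree of rational curves.'' This is correct, but it is worth spelling out: writing $\nu\colon\tilde E=\bigsqcup E_i\to E$ for the normalization, the natural map $\Omega^1_E\to\nu_*\Omega^1_{\tilde E}$ is surjective with kernel a torsion sheaf supported at the nodes, whence $H^1(E,\Omega^1_E)\cong\bigoplus_i H^1(E_i,\Omega^1_{E_i})\cong k^r$.
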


\section{Preliminaries}

\subsection{Notation and terminology}\label{subsection:Notation and terminologies}
Throughout the paper, we work over a fixed algebraically field $k$ of characteristic $p>0$ unless stated otherwise.
\begin{enumerate}
    \item A \textit{variety} means an integral separated scheme of finite type.
    \item For a proper birational morphism $f\colon Y\to X$ of schemes, we denote by $\Exc(f)$ the reduced $f$-exceptional divisor.
    \item For a $\Q$-divisor $B$ with the irreducible decomposition $B=\sum_i b_iB_i$ , we denote by $\lfloor B \rfloor$ the \textit{round-down} $\sum_i \lfloor b_i \rfloor B_i$.
    \item A pair $(X,B)$ consists of a normal variety $X$ and an effective $\Q$-divisor $B$. A \textit{pointed pair} $(x\in X,B)$ consists of a pair $(X,B) $ and a closed point $x\in X$. When $B=0$, we simply write $(x\in X,B)$ as $(x\in X)$.
    \item A morphism $g\colon (x'\in X',B')\to (x\in X,B)$ of pointed pairs is a morphism $g\colon X'\to X$ of normal varieties such that $g(B')\subset B$ and $g(x')=x$.
    \item A morphism $\phi\colon (u\in U,B_U)\to (x\in X,B)$ of pointed pairs is said to be
    an \textit{\'etale neighborhood of $x$} if $\phi\colon U\to X$ is \'etale and $B_U$ is isomorphic to the scheme-theoretic inverse image $\phi^{-1}B$.
    \item A singularity $(x\in X,B)$ is an equivalent class of pointed pairs under common \'etale neighborhoods.
    \item Given a smooth variety $X$, a reduced simple normal crossing (snc, for short) divisor $B$, and an integer $i\geq 0$, we denote by $\Omega_X^{i}(\log\,B)$ the sheaf of logarithmic differential forms.
    For a $\Q$-divisor $D$ on $X$, we denote $\Omega^i_X(\log\,B)\otimes \sO_X(\lfloor D\rfloor)$ by $\Omega^i_X(\log\,B)(D)$.
    \item Given a normal variety $X$, a reduced divisor $B$ on $X$, a $\Q$-divisor $D$ on $X$, and an integer $i\geq 0$, we denote $j_{*}(\Omega_U^{i}(\log\,B)(\lfloor D\rfloor))$ by $\Omega_X^{[i]}(\log B)(D)$, where $j\colon U\hookrightarrow X$ is the inclusion of the snc locus $U$ of $(X,B)$.
    \item For the definition of the singularities of pairs appearing in the MMP (such as \emph{canonical, klt, plt, lc}) we refer to \cite[Definition 2.8]{Kol13}. Note that we always assume that the boundary divisor is effective although \cite[Definition 2.8]{Kol13} does not impose this assumption. 
\end{enumerate}

\subsection{\texorpdfstring{$F$}--pure singularities}
In this subsection, we gather facts about $F$-purity.

\begin{defn}\label{def:F-pure}
Let $(X,B)$ be a pair of a normal variety $X$ over a perfect field of characteristic $p>0$ and an effective $\Q$-divisor $B$. 
We say $(X,B)$ is \textit{$F$-pure at a closed point $x\in X$} if 
\[
\phi\colon\sO_{X,x}\to F^e_{*}\sO_{X,x} \hookrightarrow F^e_{*}\sO_{X}((p^e-1)B)_{x}
\] 
splits as an $\sO_{X,x}$-module homomorphism for all $e>0$. Here, the first map is the $e$-times iteration of the Frobenius map, and the second map is obtained by the pushforward of a natural inclusion $\sO_{X,x}\hookrightarrow\sO_{X}((p^e-1)B)_{x}$ by $F^e$.
We say that $(X,B)$ is \textit{$F$-pure} if it is $F$-pure at every closed point.
\end{defn}
\begin{rem}\label{rem:base change of F-purity}
    We take $(X,B)$ as in Definition \ref{def:F-pure}. Let $\overline{k}$ be an algebraic closure of $k$.
    Then $(X,B)$ is $F$-pure if and only if $(X\times_{k} \overline{k}, B\times_{k} \overline{k})$ is $F$-pure, as follows.
    A splitting of the map
    \[
    \phi\colon \sO_{X,x}\to F^e_{*}\sO_{X}((p^e-1)B)_{x}
    \] 
    is equivalent to the surjectivity of the evaluation map
    \[
    \Phi\colon \mathcal{H}\mathit{om}_{\sO_{X,x}}(F^e_{*}\sO_{X}((p^e-1)B)_{x}, \sO_{X,x})\to 
    \mathcal{H}\mathit{om}_{\sO_{X,x}}(\sO_{X,x}, \sO_{X,x})\cong \sO_{X,x}, 
    \]
    which sends $\psi$ to $\psi\circ \phi$.
    Since $\Phi$ is surjective if and only if $\Phi\otimes_{k}\overline{k}$ is surjective, we conclude that $(X,B)$ is $F$-pure if and only if $(X\times_{k} \overline{k}, B\times_{k} \overline{k})$ is $F$-pure.
\end{rem}
\begin{rem}\label{rem:reduction and F-purity}
    We take $(X,B)$ as in Definition \ref{def:F-pure}. Let $B'$ be an effective $\Q$-divisor with $0\leq B'\leq B$.
    If $(X,B)$ is $F$-pure, then $(X,B')$ is $F$-pure.
    This follows from the fact that the map
    \[
    \phi\colon \sO_{X,x}\to F^e_{*}\sO_{X}((p^e-1)B)_{x}
    \] 
    factors through 
    \[
    \phi'\colon \sO_{X,x}\to F^e_{*}\sO_{X}((p^e-1)B')_{x}
    \]
    for every closed point $x\in X$.
\end{rem}
\begin{rem}
    We say a singularity $(x\in X,B)$ of a pair of a normal variety $X$ and an effective $\Q$-divisor $B$ is \textit{$F$-pure} if
    some representative $(X,B)$ is $F$-pure at $x$.
    In this case, we can confirm that every representative $(X,B)$ is $F$-pure at $x$ since the map
    \[
    \phi\colon\sO_{X,x}\to F^e_{*}\sO_{X}((p^e-1)B)_{x}
    \] 
    splits if and only if the map
    \[
    \phi\otimes_{\sO_{X,x}}\sO_{X,x}^{\wedge}\colon\sO_{X,x}^{\wedge}\to F^e_{*}\sO_{X}((p^e-1)B)_x^{\wedge}
    \] 
    splits, where $(-)_{x}^{\wedge}$ denotes the completion at $x\in X$.
\end{rem}
\begin{rem}\label{rem:F-pure-to-lc}
    If a pair $(X,B)$ of a normal surface $X$ and a reduced divisor $B$ is $F$-pure, then it is lc ($K_X+B$ is automatically $\Q$-Cartier) by \cite[Proposition 2.2 (b)]{BBKW}. Note that the $F$-purity is equivalent to the sharply $F$-purity when $B$ is reduced, and thus we can apply \cite[Proposition 2.2 (b)]{BBKW}.
\end{rem}

\section{Local vanishing theorems}

\subsection{Independence on log resolutions}

In this paper, we discuss the following vanishings:

\begin{defn}\label{def:local vanishings}
    Let $(X,B)$ be a pair of a normal surface $X$ and a reduced divisor $B$.
    We say $(X,B)$ satisfies \textit{Vanishing (I-i) at $x\in X$}
    if  \[R^1f_{*}\Omega^i_Y(\log\,B_Y)(-B_Y)_x=0\]
    holds for every log resolution $f\colon Y\to X$ of $(X,B)$ with $E\coloneqq\Exc(f)$ and $B_Y\coloneqq f^{-1}_{*}B+E$.

    We say $(X,B)$ satisfies \textit{Vanishing (II) at $x\in X$}
    if  \[R^1f_{*}\Omega^1_Y(\log\,B_Y)_x=0\]
    holds for every log resolution $f\colon Y\to X$ of $(X,B)$.
    
    We say $(X,B)$ satisfies \textit{Vanishing (I-i) (resp.~(II))} if it satisfies Vanishing (I-i) (resp.~(II)) at every closed point $x\in X$.
\end{defn}

\begin{rem}\label{rem:vanishing for singularities}
    For a singularity $(x\in X,B)$, if Vanishing (I-i) (resp.~(II)) holds at $x$ for some representative, 
    then it holds for every representative. For example, for Vanishings (I-i), this fact follows from the fact that
    \[R^1f_{*}\Omega^i_Y(\log\,B_Y)(-B_Y)_x=0\]
    if and only if 
    \[R^1f_{*}\Omega^i_Y(\log\,B_Y)(-B_Y)^{\wedge}_x=0.\]
\end{rem}

In the rest of this subsection, we confirm that Vanishings (I-i) and (II) do not depend on the choice of log resolutions.

We first show a variant of Fujino's vanishing \cite[Theorem 5.2]{Fujino} using a splitting of the pullback map of sheaves of logarithmic differential forms by Totaro \cite[Lemma 2.1]{Totaro(logBott)}. 

\begin{prop}\label{prop:toric cases}
    Let $(X,B)$ be a toric pair such that $B$ is reduced.
    Let $f\colon Y\to X$ be a toric log resolution of $(X,B)$ with $E\coloneqq \Exc(f)$ and $B_Y\coloneqq f^{-1}_{*}B+E$.
    Then $R^jf_{*}\Omega^i_Y(\log\,B_Y)(-B_Y)=0$ for all $i\geq 0$ and $j>0$.
\end{prop}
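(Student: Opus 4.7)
My plan is to adapt Fujino's characteristic-zero argument for \cite[Theorem 5.2]{Fujino}, substituting Totaro's splitting of the pullback of logarithmic differentials \cite[Lemma 2.1]{Totaro(logBott)} for the Hodge-theoretic input. The key ingredients I combine are: on a smooth toric variety $Y$ of dimension $n$, the sheaf $\Omega^i_Y(\log T_Y)$ of log forms along the full toric boundary $T_Y$ is free of rank $\binom{n}{i}$ and $\mathcal{O}_Y(-T_Y)=\omega_Y$; Grauert--Riemenschneider vanishing holds for toric resolutions in every characteristic; and Totaro's lemma gives an $\mathcal{O}$-linear splitting of pullback maps of log differentials between toric varieties, replacing the degeneration of the log Hodge-to-de Rham spectral sequence used in characteristic zero.

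Since the assertion is local on $X$, I would first assume $X$ is affine toric. Noting that $B_Y\subseteq T_Y$ (since $B_Y$ is a sum of toric divisors), I would induct on the number of components of $T_Y-B_Y$. The base case $B_Y=T_Y$ reduces to $R^jf_*(\omega_Y^{\oplus\binom{n}{i}})=0$ for $j>0$, which is Grauert--Riemenschneider. For the inductive step, pick a component $D\subset T_Y-B_Y$: then $f(D)$ is a toric divisor of $X$ and $f|_D\colon D\to f(D)$ is itself a toric log resolution in dimension $n-1$. Combining the residue exact sequence
\[
0\to \Omega^i_Y(\log B_Y)\to \Omega^i_Y(\log(B_Y+D))\to \Omega^{i-1}_D(\log B_Y|_D)\to 0
\]
with the Koszul-type sequence obtained by tensoring $0\to\mathcal{O}_Y(-D)\to\mathcal{O}_Y\to\mathcal{O}_D\to 0$ with $\Omega^i_Y(\log B_Y)(-B_Y)$, and appealing to the inductive hypothesis on $(Y,B_Y+D)$ and on $(D,B_Y|_D)$, I would reduce the vanishing of $R^jf_*\Omega^i_Y(\log B_Y)(-B_Y)$ to pushforwards of line bundles controllable via Grauert--Riemenschneider. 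Totaro's splitting enters as the mechanism to align these two exact sequences: in characteristic zero the alignment is automatic via Hodge theory, but in positive characteristic one needs an explicit $\mathcal{O}$-linear splitting of the relevant toric pullback maps to match the twists.

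The main obstacle will be the twist bookkeeping, in particular the conormal bundle $N_{D/Y}^{\vee}=\mathcal{O}_D(-D|_D)$ appearing after restriction to $D$, which is not manifestly a piece of the toric boundary of $D$. One needs to use the toric relation expressing $D|_D$ as a $\mathbb{Z}$-linear combination of toric divisors of $D$ to rewrite the tail terms of the long exact sequences in a form matching the inductive hypothesis, and similarly to interpret the Koszul quotient $\Omega^i_Y(\log B_Y)|_D\otimes\mathcal{O}_D(-B_Y|_D)$ via the conormal sequence so that its higher direct images are absorbed by the induction on $\dim Y$. I expect Totaro's splitting to enter precisely at this step, providing the canonical toric retractions that absorb the normal-bundle twists in positive characteristic.
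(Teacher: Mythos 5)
Your proposal diverges from the paper's actual proof, and it contains a gap that you acknowledge but do not resolve. The paper's argument is short and does not involve any residue sequences, Koszul sequences, or induction on the boundary: it uses the multiplication-by-$l$ toric endomorphism $g\colon Y\to Y$ (with $l$ prime to $p$) exactly as in Fujino's Theorem 5.2, applies Totaro's splitting to $g^n$ to obtain a split injection $\Omega^i_Y(\log B_Y)(-B_Y)\hookrightarrow(g^n)_*\Omega^i_Y(\log B_Y)(-B_Y)$, observes that this factors through $(g^n)_*\Omega^i_Y(\log B_Y)(-l^nB_Y)$, and then makes the twist so negative that it falls below $-B'+mA$ for an $f$-ample $A$ with $\lfloor A\rfloor=-E$, at which point relative Serre vanishing applies. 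The Grauert--Riemenschneider base case and the induction on the boundary components in your sketch play no role in the paper; the paper's only use of Totaro's lemma is to split the pullback along $g^n$.

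The concrete gap in your sketch is the twist mismatch that you flag in your last paragraph. After tensoring the residue sequence for $D\subset T_Y-B_Y$ by $\mathcal{O}_Y(-B_Y)$, the middle term is $\Omega^i_Y(\log(B_Y+D))(-B_Y)=\Omega^i_Y(\log(B_Y+D))(-(B_Y+D))\otimes\mathcal{O}_Y(D)$, which is the inductively controlled sheaf twisted by $\mathcal{O}_Y(D)$, not the sheaf itself; similarly the tail on $D$ carries the conormal twist $\mathcal{O}_D(-D|_D)$. Your proposed fix — that ``Totaro's splitting provides canonical toric retractions that absorb the normal-bundle twists'' — does not match what \cite[Lemma 2.1]{Totaro(logBott)} actually gives: that lemma produces an $\mathcal{O}$-linear splitting of the \emph{pullback} of log differentials under a finite morphism with ramification indices prime to $p$; it says nothing about conormal bundle twists or about retracting a residue sequence onto a Koszul sequence. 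Without a concrete mechanism to absorb the stray $\mathcal{O}_Y(D)$ and $\mathcal{O}_D(-D|_D)$ twists, the inductive step does not close. (If you want an induction-free route, the paper's argument is the one to follow: choose $A$ $f$-ample with $\lfloor A\rfloor=-E$, take $m\gg 0$ to kill the higher direct images of the twist by $-B'+mA$, and then drive $n\to\infty$ in the $g^n$-splitting so that $-l^nB_Y\le -B'+mA$.)
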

\begin{proof}
    We fix a positive integer $l$ that is not divisible by $p$.
    We have an $l$ times multiplication map $g\colon Y\to Y$ \cite[Section 2.1]{Fujino}.
    The ramification index of every Cartier divisor on $Y$ with respect to $g^n$ is equal to $l^n$, and thus
    we have a split injection
    \begin{equation}\label{eq:split1}
        \Omega^i_Y(\log\,B_Y)(-B_Y)\to (g^{n})_{*}\Omega^i_Y(\log\,B_Y)(-B_Y) 
    \end{equation}
    for every $n>0$ by \cite[Lemma 2.1]{Totaro(logBott)}.
    We note that \eqref{eq:split1} factors into
    \begin{multline*}
        \Omega^i_Y(\log\,B_Y)(-B_Y)=\Omega^i_Y(\log\,B_Y)\otimes \sO_Y(-B_Y)\\\xrightarrow{(g^n)^{*}\otimes \sO_Y(-B_Y)} ((g^{n})_{*}\Omega^i_Y(\log\,B_Y))\otimes \sO_Y(-B_Y)=(g^{n})_{*}\Omega^i_Y(\log\,B_Y)(-l^nB_Y)
    \end{multline*}
    and a natural inclusion
    \begin{equation}\label{eq:split3}
        (g^{n})_{*}\Omega^i_Y(\log\,B_Y)(-l^nB_Y)\to (g^{n})_{*}\Omega^i_Y(\log\,B_Y)(-B_Y)
    \end{equation}
    Take an $f$-ample $\Q$-divisor $A$ such that $\lfloor A\rfloor=-E$ and 
    fix $m\gg0$ such that
    \[
    R^jf_{*}\Omega^i_Y(\log\,B_Y)(-f^{-1}_{*}B+mA)=0
    \]
    for all $j>0$.
    For $n\gg0$, the map \eqref{eq:split3} factors into
    \begin{align*}
        (g^{n})_{*}\Omega^i_Y(\log\,B_Y)(-l^nB_Y)&\hookrightarrow (g^n)_{*}\Omega^i_Y(\log\,B_Y)(-f^{-1}_{*}B+mA)\\ 
        &\hookrightarrow (g^{n})_{*}\Omega^i_Y(\log\,B_Y)(-B_Y)
    \end{align*}
    of natural inclusions.
    Thus, \eqref{eq:split1} factors through
    \begin{equation}\label{eq:split2}
        \Omega^i_Y(\log\,B_Y)(-B_Y)\to (g^n)_{*}\Omega^i_Y(\log\,B_Y)(-f^{-1}_{*}B+mA),
    \end{equation}
    and \eqref{eq:split2} also splits.
    Applying $R^jf_{*}$ to \eqref{eq:split2}, we obtain 
    \[
    R^jf_{*}\Omega^i_Y(\log\,B_Y)(-B_Y) \hookrightarrow (g^n)_{*}R^jf_{*}\Omega^i_Y(\log\,B_Y)(-f^{-1}_{*}B+mA)=0, 
    \]
    as desired.
\end{proof}

\begin{lem}\label{lem:smooth cases}
    Let $(X,B)$ be a pair of a smooth surface $X$ and a reduced snc divisor $B$. 
    We fix $x\in X$ and 
    let $f\colon Y\to X$ be the blow-up at $x\in X$.
    Set $E\coloneqq\Exc(f)$ and $B_Y\coloneqq f_{*}^{-1}B+E$. 
    Then the following holds:
    \begin{enumerate}
        \item[\textup{(1)}] $R^jf_{*}\Omega^i_Y(\log\,B_Y)(-B_Y)=0$ for all $i\geq 0$ and all $j>0$. 
        \item[\textup{(2)}] $f_{*}\sO_Y(-B_Y)=\sO_X(-B)$ if $x\in B$.
        \item[\textup{(3)}] $f_{*}\Omega^i_Y(\log\,B_Y)(-B_Y)=\Omega^i_X(\log\,B)(-B)$ for $i>0$.
    \end{enumerate}
\end{lem}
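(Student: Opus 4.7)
The plan is to reduce everything to a local toric model and apply Proposition \ref{prop:toric cases}, together with some projection formula arguments and a concrete calculation on the two affine charts of the blow-up.

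Since all three claims are local at $x$, I would pass to an étale neighborhood where $(X, B)$ is the standard toric pair $(\Spec k[x_1, x_2], \sum_{i \le m} \{x_i = 0\})$ with $m := \mathrm{mult}_x B \in \{0,1,2\}$ and $x$ the origin. The blow-up $f \colon Y \to X$ at $x$ is then a toric log resolution, and one has $B_Y = f^*B - (m-1)E$. Part (1) is an immediate application of Proposition \ref{prop:toric cases}. For part (2), assuming $x \in B$ so $m \ge 1$, the projection formula and the standard computation $f_*\sO_Y(kE) = \sO_X$ for $k \in \{0, 1\}$ (verified via the sequence $0 \to \sO_Y((k-1)E) \to \sO_Y(kE) \to \sO_E(kE) \to 0$ with $\sO_E(E) = \sO_{\PP^1}(-1)$) give $f_*\sO_Y(-B_Y) = \sO_X(-B) \otimes f_*\sO_Y((m-1)E) = \sO_X(-B)$.

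For (3), I would consider the natural pullback map $\phi \colon f^*\Omega^i_X(\log B) \to \Omega^i_Y(\log B_Y)$. It is injective (both sides are locally free of the same rank and $\phi$ is generically an isomorphism) with torsion cokernel $C$ supported on $E$. Twisting the resulting short exact sequence by $\sO_Y(-B_Y)$ and applying $Rf_*$, using (1) and $R^1 f_*\sO_Y(-B_Y) = 0$ to suppress the $R^1$ terms, yields an exact sequence
\[
0 \to \Omega^i_X(\log B) \otimes f_*\sO_Y(-B_Y) \to f_*\Omega^i_Y(\log B_Y)(-B_Y) \to f_*\!\bigl(C \otimes \sO_Y(-B_Y)\bigr) \to 0.
\]
When $x \in B$, the first term collapses to $\Omega^i_X(\log B)(-B)$ by (2), so (3) reduces to verifying that $f_*(C \otimes \sO_Y(-B_Y))$ vanishes. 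When $x \notin B$ the first term is $\Omega^i_X \otimes \m_x$, and one must further show that the resulting extension recovers $\Omega^i_X$ on the nose.

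The main obstacle I expect is evaluating this last pushforward. I would carry out Smith normal form computations for $\phi$ on the two affine toric charts of $Y$ in each case $m \in \{0, 1, 2\}$; this pins $C$ down as an explicit coherent sheaf on $E \cong \PP^1$. Using $\sO_Y(-E)|_E = \sO_{\PP^1}(1)$ (the conormal bundle) and $\sO_Y(-f^{-1}_*B)|_E = \sO_{\PP^1}(-m)$, one computes the $\PP^1$-decomposition of $C \otimes \sO_Y(-B_Y)|_E$ and verifies that its space of global sections matches the expected dimension---namely zero when $x \in B$, or $\binom{2}{i}$ when $x \notin B$, the latter being precisely what is needed to extend $\Omega^i_X \otimes \m_x$ back to $\Omega^i_X$ in the exact sequence. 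This yields the desired identification in (3).
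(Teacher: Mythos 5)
Parts (1) and (2) of your plan match the paper exactly: (1) by passing to an \'etale-local toric model and invoking Proposition~\ref{prop:toric cases}, (2) by writing $-B_Y = f^*(-B) + (m-1)E$ and applying the projection formula (you verify the extra ingredient $f_*\sO_Y(kE)=\sO_X$ for $k\in\{0,1\}$, which the paper takes as standard). For (3), however, you take a genuinely different route. The paper starts from the adjunction/residue short exact sequence
\[
0\to \Omega^1_Y(\log B_Y)(-B_Y)\to \Omega^1_Y\to \Omega^1_E\oplus\bigl(\textstyle\bigoplus_i\Omega^1_{B'_i}\bigr)\to 0,
\]
pushes it forward using $f_*\Omega^1_Y=\Omega^1_X$, $f_*\Omega^1_E=0$ (as $E\cong\PP^1$), and $f_*\Omega^1_{B'_i}=\Omega^1_{B_i}$, and then identifies $f_*\Omega^1_Y(\log B_Y)(-B_Y)$ with $\Omega^1_X(\log B)(-B)$ by comparing with the analogous sequence on $X$; the case $i=2$ is just $f_*\omega_Y=\omega_X$. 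You instead analyze the pullback map $\phi\colon f^*\Omega^i_X(\log B)\to\Omega^i_Y(\log B_Y)$, twist by $\sO_Y(-B_Y)$, apply $Rf_*$ using (1) and $R^1f_*\sO_Y(-B_Y)=0$, and propose to pin down the torsion cokernel $C$ by Smith-normal-form computations on the two charts. This is sound in outline (and, e.g., one can check that for $m=2$ the map $\phi$ is already an isomorphism, so everything collapses to the projection formula), but it buys you nothing over the paper's argument: you still have to do the case-by-case chart computations, and you correctly flag the extra subtlety in the $x\notin B$ case of upgrading a dimension count to an identification of the extension, which requires the inclusion $f_*\Omega^i_Y(\log B_Y)(-B_Y)\subset\Omega^i_X(\log B)(-B)$ coming from reflexivity. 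The residue-sequence comparison sidesteps all of this by producing the two sheaves as literally the same kernel inside $\Omega^1_X$. Your route would work if the computations were completed, but as written it is a plan rather than a proof, and the remaining work is precisely where the content lies.
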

\begin{proof}
    Since $B$ is snc, taking an \'etale neighborhood of $x\in X$, we may assume that $(X,B)$ and $f$ are toric.
    Then (1) follows from Proposition \ref{prop:toric cases}.
    
    We next show (2). 
    Since $x \in B$, we have $f^{*}B=f_{*}^{-1}B+iE$ for $i>0$, and thus $-B_Y=f_{*}^{-1}(-B)-E=f^{*}(-B)+(i-1)E$. Thus, the projection formula shows 
    \[
    f_{*}\sO_Y(-B_Y)=f_{*}\sO_Y(f^{*}(-B)+(i-1)E)=\sO_X(-B),
    \]
    as desired.

    Finally, we show (3).
    We have $f_{*}\omega_Y=\omega_X$ since $X$ is smooth, and the case $i=2$ holds.
    Suppose that $i=1$.
    We have an exact sequence (cf.~\cite[Lemma 4.1]{Mustata-Olano-Popa})
    \[
    0\to \Omega^1_Y(\log\,B_Y)(-B_Y) \to \Omega^1_Y \to \Omega^1_{E}\oplus (\oplus_i \Omega^1_{B'_i})\to 0,
    \]
    where $B=\sum_i B_i$ is the irreducible decomposition and $B'_i\coloneqq f_{*}^{-1}B_i$.
    Taking the pushforward by $f$, we obtain
    \[
        0\to f_{*}\Omega^1_Y(\log\,B_Y)(-B_Y) \to \Omega^1_X \to \oplus_i \Omega^1_{B_i}
    \]
    Comparing with the exact sequence
     \[
    0\to \Omega^1_X(\log\,B)(-B) \to \Omega^1_X \to \oplus_i \Omega^1_{B_i}\to 0, 
    \]
    we obtain 
    \[
     f_{*}\Omega^1_Y(\log\,B_Y)(-B_Y)=\Omega_X^1(\log\,B)(-B),
    \]
    as desired.
\end{proof}

\begin{prop}\label{prop:independence}
    The vanishing (I-i) and (II) do not depend on the choice of log resolutions for all $i \geq 0$.
\end{prop}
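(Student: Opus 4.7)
The plan is to reduce to comparing two log resolutions that differ by a single point blow-up, and then to invoke Lemma \ref{lem:smooth cases} through the Leray spectral sequence.

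Given two log resolutions $f_1 \colon Y_1 \to X$ and $f_2 \colon Y_2 \to X$ of $(X,B)$, resolving the closure of the graph of $f_2^{-1}\circ f_1$ produces a common log resolution $f_3 \colon Y_3 \to X$ dominating both. Since any proper birational morphism between smooth surfaces factors as a composition of blow-ups at closed points, and since point blow-ups preserve the snc property of the log boundary, every intermediate stage is again a log resolution of $(X,B)$. An induction on the number of blow-ups then reduces the statement to the following: if $f \colon Y\to X$ is a log resolution of $(X,B)$ and $g \colon Y'\to Y$ is the blow-up at a single closed point $y\in Y$, then Vanishing (I-$i$) (resp.\ (II)) at $x\in X$ holds for $f$ if and only if it holds for $f\circ g$.

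For this, observe that $(Y,B_Y)$ is smooth with snc boundary, so Lemma \ref{lem:smooth cases} applies to $g$. Part (1) gives $R^jg_*\Omega^i_{Y'}(\log B_{Y'})(-B_{Y'})=0$ for $j>0$, so the Leray spectral sequence for $f\circ g$ collapses to
\[
R^1(f\circ g)_*\Omega^i_{Y'}(\log B_{Y'})(-B_{Y'}) \;\cong\; R^1f_*\bigl(g_*\Omega^i_{Y'}(\log B_{Y'})(-B_{Y'})\bigr).
\]
For $i\geq 1$, part (3) identifies the sheaf in parentheses with $\Omega^i_Y(\log B_Y)(-B_Y)$, and the two vanishings are equivalent. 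For $i=0$ with $y\in B_Y$, part (2) yields the corresponding identification $g_*\sO_{Y'}(-B_{Y'})=\sO_Y(-B_Y)$. If instead $y\notin B_Y$, then $y\notin \Exc(f)$, so $f$ is an isomorphism near $y$; hence $x\coloneqq f(y)$ is a smooth point of $X$ outside $B$, and Vanishing (I-$0$) at $x$ holds trivially both for $f$ (which is a local isomorphism there) and for $f\circ g$ (which is a single point blow-up of a smooth surface, to which Lemma \ref{lem:smooth cases}(1) applies directly).

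For Vanishing (II), the same scheme works once one establishes the analogues of Lemma \ref{lem:smooth cases}(1) and (3) for the untwisted sheaf $\Omega^1_Y(\log B_Y)$, namely $g_*\Omega^1_{Y'}(\log B_{Y'})=\Omega^1_Y(\log B_Y)$ and $R^jg_*\Omega^1_{Y'}(\log B_{Y'})=0$ for $j>0$. These are standard local computations for a point blow-up of a smooth surface with snc boundary and follow, for instance, from the same étale-local toric reduction used in the proof of Lemma \ref{lem:smooth cases}. The only real subtlety in the whole argument is the case $y\notin B_Y$ for Vanishing (I-$0$), which is resolved by the local isomorphism observation above.
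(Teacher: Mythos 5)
Your proposal is correct and follows essentially the same strategy as the paper: pass to a common dominating log resolution, factor the intermediate morphism into point blow-ups, and apply Lemma \ref{lem:smooth cases} together with the Leray spectral sequence. The only notable organizational difference is in how the exceptional cases are handled. The paper first replaces $X$ by a small neighborhood of the fixed point $x$ so that every blow-up center automatically lies in the exceptional divisor (hence in $B_Y$), making Lemma \ref{lem:smooth cases}(2) uniformly applicable, and then treats the case ``$f$ is an isomorphism'' separately. You instead reduce directly to a single blow-up and perform an explicit case split on whether the center $y$ lies in $B_Y$; the $y\notin B_Y$ case you dispose of via the local-isomorphism observation. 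Both routes work, and your treatment of the $i=0$, $y\notin B_Y$ case is arguably more transparent than the paper's ``both sides are zero'' remark. One small caveat: the untwisted analogues $g_*\Omega^1_{Y'}(\log B_{Y'})=\Omega^1_Y(\log B_Y)$ and $R^{>0}g_*\Omega^1_{Y'}(\log B_{Y'})=0$ needed for Vanishing (II) do not follow from the Totaro-splitting argument in Proposition \ref{prop:toric cases}, since that mechanism gains positivity precisely from the twist by $-B_Y$; they instead require a separate (but genuinely elementary) argument, e.g.\ via the residue sequence reducing to $R^{>0}g_*\sO_{Y'}=0$ and $R^{1}g_*\Omega^1_{Y'}$ being killed by the Chern class of the exceptional curve. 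The paper glosses over this in the same way by saying the (II) case is ``similar,'' so this is a shared, harmless imprecision rather than a gap unique to your write-up.
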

\begin{proof}
    We discuss only the case of Vanishing (I-i) since the other is similar.
    Let $(X,B)$ be a pair of a normal surface $X$ and a reduced divisor $B$.
    We take two log resolutions $f\colon Y\to X$ and $f'\colon Y'\to X$ of $(X,B)$.
    Set $E\coloneqq\Exc(f)$, $E'\coloneqq \Exc(f')$, $B_Y\coloneqq f^{-1}_{*}B+E$, and $B_{Y'}\coloneqq (f')^{-1}_{*}B+E'$.
    We aim to show that 
    \begin{equation}\label{eq:independent}
        R^1f_{*}\Omega^i_{Y}(\log\,B_{Y})(-B_{Y})\cong R^1(f')_{*}\Omega^i_{Y'}(\log\,B_{Y'})(-B_{Y'}).
    \end{equation}
    Since we can find a log resolution that dominates both $Y$ and $Y'$, we may assume that $f'$ factors through $f$.
    Let $h\colon Y'\to Y$ be the induced morphism.
    Since $Y$ and $Y'$ are smooth, we have a decomposition $h=h_n\circ h_{n-1} \circ \cdots \circ h_1$, where each $h_i$ is a blow-up at a closed point.
    We fix $x\in X$ and prove the equality of \eqref{eq:independent} at $x$.
    By replacing $X$ with an open neighborhood of $x$, we may assume that $f'$ is a log resolution at $x$, i.e., that $Y'\setminus E'\cong X\setminus \{x\}$.
    Then each $h_j$ is a blow-up at a closed point that is contained in $\Exc(f\circ h_n\circ\cdots \circ h_{j+1})$, and thus we can apply Lemma \ref{lem:smooth cases} (2) to $h_j$ if $j<n$.
    Therefore, a repeated use of Lemma \ref{lem:smooth cases} and the Leray spectral sequence gives
    \[
    R^1h_{*}\Omega^i_{Y'}(\log\,B_{Y'})(-B_{Y'})=0,
    \]
    and we obtain an isomorphism
    \[
    R^1f_{*}h_{*}\Omega^i_{Y'}(\log\,B_{Y'})(-B_{Y'})\cong R^1(f')_{*}\Omega^i_{Y'}(\log\,B_{Y'})(-B_{Y'}).
    \]
    When $f$ is not an isomorphism, we have $B_Y\neq 0$ and 
    \[
    h_{*}\Omega^i_{Y'}(\log\,B_{Y'})(-B_{Y'})\cong \Omega_Y^i(\log\,B_Y)(-B_Y).
    \]
    by Lemma \ref{lem:smooth cases} (2) and (3).
    Therefore, we obtain \eqref{eq:independent}.
    When $f$ is an isomorphism, then both hand sides of \eqref{eq:independent} are zero, and in particular \eqref{eq:independent} is still true.
    Thus, we conclude.
\end{proof}

\begin{rem}\label{rem:Vanishing to ext thm}
Let $(x\in X,B)$ be a singularity of a pair of a normal surface $X$ and a reduced divisor $B$.
Let $f\colon Y\to X$ be a log resolution of $(X,B)$ at $x\in X$ with $E\coloneqq \Exc(f)$ and $B_Y\coloneqq f_{*}^{-1}B+E$.
    \begin{enumerate}
        \item 
        By the formal duality, we have
        \[
        R^1f_{*}\Omega^1_Y(\log\,B_Y)(-B_Y)\cong H^1_{E}(\Omega^1_Y(\log\,B_Y)).
        \]
        We have an exact sequence
        \[
        0\to H^0(Y, \Omega^1_Y(\log\,B_Y)) \to H^0(Y\setminus E, \Omega^1_Y(\log\,B_Y)) \to H^1_{E}(\Omega^1_Y(\log\,B_Y)).
        \]
        Therefore, Vanishing (I-1) implies the logarithmic extension theorem, i.e., the surjectivity of the restriction map
        \[
        f_{*}\Omega_Y^1(\log\,B_Y) \hookrightarrow \Omega^{[1]}_X(\log\,B).
        \]
        \item Similarly, Vanishing (II) implies the surjectivity of the restriction map
        \[
        f_{*}\Omega_Y^1(\log\,B_Y)(-B_Y) \hookrightarrow \Omega^{[1]}_X(\log\,B)(-B).
        \]
        In particular, if $B=0$, then 
        \[
        f_{*}\Omega_Y^1(\log\,E)(-E) (\hookrightarrow f_{*}\Omega_Y^1) \hookrightarrow \Omega^{[1]}_X
        \]
        is surjective.
    \end{enumerate}
\end{rem}

\subsection{Reduction to finite Galois covers}

In this subsection, we observe that Vanishing (I-i) and (II) can be reduced to finite Galois covers degree prime to $p$.

\begin{defn}
    A finite surjective morphism $g\colon X'\to X$ of normal varieties is called a \textit{finite Galois cover} if the extension $K(X')/K(X)$ of function fields is Galois.
\end{defn}

\begin{lem}\label{lem:tame ramification}
Let $g\colon X'\to X$ be a finite Galois cover of degree prime to $p$ of normal varieties.
Then the ramification index of every divisor on $X'$ is not divisible by $p$.
\end{lem}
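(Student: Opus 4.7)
The plan is to reduce the statement to a question about ramification in a Galois extension of discrete valuation rings, and then invoke the classical structure theorem for the inertia group. Specifically, given a prime divisor $D'$ on $X'$, I would set $D \coloneqq g(D')$; this is a prime divisor on $X$ since $g$ is finite. Localizing, one obtains a finite extension of DVRs $R \coloneqq \sO_{X,D} \hookrightarrow R' \coloneqq \sO_{X',D'}$, and the induced extension of fraction fields is $K(X) \hookrightarrow K(X')$, which is Galois with group $G$ of order prime to $p$. The ramification index of $D'$ as a divisor on $X'$ (relative to the cover $g$) is by definition the ramification index of $R'$ over $R$.

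Next, I would apply standard local ramification theory to this Galois extension of DVRs. Let $I \subseteq G$ denote the inertia subgroup of $R'$ over $R$, i.e., the subgroup of elements fixing $R'$ setwise and acting trivially on its residue field. The classical structure theorem (see, e.g., Serre, \emph{Local Fields}, Ch.\ IV) furnishes a normal subgroup $I_1 \trianglelefteq I$, the wild inertia, which is a $p$-group, together with an injection of $I/I_1$ into the multiplicative group of the residue field of $R'$; in particular, $I/I_1$ is cyclic of order prime to $p$, equal to the tame ramification index.

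Since $I$ is a subgroup of $G$ and $|G|$ is coprime to $p$ by hypothesis, the $p$-group $I_1$ must be trivial. Hence the ramification is tame, and the ramification index $e$ of $D'$ over $D$ divides $|I|$, which in turn divides $|G|$; in particular, $p \nmid e$. There is no substantive obstacle here: once the local-to-global reduction is made, the conclusion is immediate from the characterization of wild ramification as the $p$-power contribution to the inertia group, together with the coprimality assumption. The only point that deserves a moment's care is the fact that the residue fields at generic points of divisors need not be perfect, but the structure theorem cited above is insensitive to this, since it pins down $I_1$ as a $p$-subgroup of $I$ regardless.
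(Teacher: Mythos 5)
Your proof is correct but takes a genuinely different route from the paper. You localize at the generic point of the divisor, obtaining a Galois extension of DVRs, and invoke the structure theory of inertia groups: the wild inertia $I_1$ is a $p$-group inside $G$, hence trivial by the coprimality hypothesis, so the ramification is tame and the index divides $|I|$, which divides $|G|$. The paper instead runs a global cycle-theoretic argument: after noting $X'/G \cong X$, it writes $\deg(g)\,B = g_*g^*B = \sum_i r_i e_i B$ (where $e_i$ is the residue degree) and uses transitivity of the $G$-action on the primes over $B$ to force $r_1 = \cdots = r_n = r$ and $e_1 = \cdots = e_n = e$, whence $\deg(g) = nre$ and $p \nmid r$. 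Both amount to the fundamental identity of ramification theory, but the paper's version sidesteps the inertia filtration entirely (and, with it, the concern you raise about imperfect residue fields at generic points of divisors, which the global pushforward–pullback computation never has to confront). Your route is heavier than necessary — once one knows the ramification index divides $|G|$, the discussion of tame versus wild inertia is a detour — but it is a clean and standard application of local ramification theory and reaches the same conclusion.
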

\begin{proof}
By shrinking $X$, we may assume that $X$ and $X'$ are affine.
Let $G$ be a Galois group corresponds to $K(X')/K(X)$.
Then $G$ acts on $X'$ since $X'$ is normal.
Moreover, the induced map $X'/G\to X$ is an isomorphism since it is integral and birational, and both $X$ and $X'/G$ are normal.

Let $B'$ be a prime divisor on $X'$ and $B\coloneqq g_{*}B'$.
Let $\Supp(g^{*}B)=\sum_{i=1}^{n}B'_i$ and $B'_1\coloneqq B'$.
Let $r_i$ be the ramification index of $g$ at $B'_i$.
Then we have \[\deg(g)B=g_{*}g^{*}B=g_{*}(\sum_{i=1}^{n}r_iB'_i)=\sum_{i=1}^{n}r_ie_iB\] for some $e_i\in\Z_{>0}$.
Since $G$ acts on the set $\{B'_1,\ldots,B'_n\}$ transitively, we have $r_1=\cdots =r_n=:r$ and $e_1=\cdots =e_n=:e$.
Therefore we obtain $\deg(g)=nre$, which shows that $r$ is not divisible by $p$.
\end{proof}

\begin{prop}\label{prop:reduction to Galois cover}
    Let $g\colon X'\to X$ be a finite Galois cover degree prime to $p$ of normal surfaces.
    Let $B$ be a reduced divisor on $X$ and $B'\coloneqq \Supp(g^{*}B)$.
    If $(X',B')$ satisfies Vanishing (I-i) for some $i\geq 0$ (resp.~Vanishing (II)), then so does $(X,B)$.
\end{prop}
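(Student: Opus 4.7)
The plan is to transfer the vanishing from $(X', B')$ to $(X, B)$ by constructing compatible log resolutions and invoking Totaro's splitting of the pullback of log differentials, which is available because the degree of $g$ is prime to $p$.

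First, I would construct a commutative diagram
\[
\begin{CD}
Y' @>{f'}>> X' \\
@V{h}VV @VV{g}V \\
Y @>{f}>> X
\end{CD}
\]
where $f\colon Y\to X$ is a log resolution of $(X, B)$, $f'\colon Y'\to X'$ is a log resolution of $(X', B')$, and $h\colon Y'\to Y$ is generically a Galois cover with group $G = \operatorname{Gal}(X'/X)$. Concretely, starting from any log resolution $f$, one normalizes $Y\times_X X'$ and takes a $G$-equivariant log resolution of that normalization, arranging $B_{Y'}$ to contain both the total transform of $B_Y$ and all exceptional divisors of the resolution step. Proposition \ref{prop:independence} provides the freedom to choose $f$. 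By Lemma \ref{lem:tame ramification} applied to $h$, the ramification index of $h$ along every prime divisor of $Y'$ is prime to $p$.

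Therefore, Totaro's splitting lemma \cite[Lemma 2.1]{Totaro(logBott)} provides, analogously to the factoring used in the proof of Proposition \ref{prop:toric cases}, an $\sO_Y$-linear split injection
\[
\Omega^i_Y(\log B_Y)(-B_Y)\;\hookrightarrow\; h_*\Omega^i_{Y'}(\log B_{Y'})(-B_{Y'}),
\]
the compatibility of twists being arranged by the projection formula, using that $h^{*}B_Y$ has coefficient $\geq 1$ (the ramification index) on each component of $h^{-1}(B_Y)_{\mathrm{red}}\subseteq B_{Y'}$, while the remaining $h$-exceptional components of $B_{Y'}$ contribute favorably. Applying $Rf_*$ exhibits $R^1 f_*\Omega^i_Y(\log B_Y)(-B_Y)$ as a direct summand of $R^1 f_*\, h_*\Omega^i_{Y'}(\log B_{Y'})(-B_{Y'})$, which by the Leray spectral sequence for $f\circ h$ injects into $R^1(f\circ h)_*\Omega^i_{Y'}(\log B_{Y'})(-B_{Y'})$. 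Since $g$ is finite (so $g_*$ is exact), this last term equals $g_* R^1 f'_*\Omega^i_{Y'}(\log B_{Y'})(-B_{Y'})$, which is zero by the vanishing hypothesis on $(X',B')$. Hence Vanishing (I-i) holds for $(X,B)$. The case of Vanishing (II) is identical after dropping the twists by $-B_Y$ and $-B_{Y'}$.

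The main obstacle is the precise construction of the split injection at the level of twisted log differentials: Totaro's lemma is naturally phrased with respect to $h^{-1}(B_Y)_{\mathrm{red}}$, so one must verify that the splitting survives both the enlargement of the log structure to $B_{Y'}$ and the tensoring by $\sO_Y(-B_Y)$. This is exactly the bookkeeping executed in Proposition \ref{prop:toric cases}, where the pullback map is factored through an inclusion of sheaves twisted by different divisors with supports inside $B_{Y'}$ and the splitting is shown to descend to the desired form.
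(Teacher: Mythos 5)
The outline (use Totaro's splitting to transfer the vanishing across a tame Galois cover, then conclude via a Leray spectral sequence) is the right strategy and is what the paper does, but there is a genuine gap in where you apply the splitting.

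Your morphism $h\colon Y'\to Y$ is not finite: by your own construction $Y'$ is a $G$-equivariant log resolution of the normalization $N$ of $Y\times_X X'$, so $h$ factors as (resolution) $\circ$ (finite cover), and the resolution step contracts divisors. Both ingredients you invoke for $h$ require finiteness. Lemma~\ref{lem:tame ramification} is stated for finite Galois covers, and for $h$-exceptional divisors the ``ramification index'' is not even well-defined in the relevant sense. More seriously, Totaro's \cite[Lemma~2.1]{Totaro(logBott)} produces a splitting of the pullback of (reflexive) log differentials along a \emph{finite} tame cover, because the splitting comes from a normalized trace map $h_*\sO_{Y'}\to\sO_Y$; for a projective birational morphism there is no trace, and in general there is no $\sO_Y$-linear retraction of $\Omega^i_Y(\log B_Y)(-B_Y)\to h_*\Omega^i_{Y'}(\log B_{Y'})(-B_{Y'})$. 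So the central split injection in your argument is unjustified.

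The fix is to separate the two steps, which is what the paper does. Keep $N\coloneqq$ the normalization of $Y\times_X X'$ as a (possibly singular) normal surface, with finite Galois cover $g_Y\colon N\to Y$ of degree prime to $p$, and apply Totaro's lemma to $g_Y$ using reflexive sheaves, obtaining a split injection
\[
\Omega^i_Y(\log B_Y)(-B_Y)\hookrightarrow (g_Y)_*\,\Omega^{[i]}_{N}(\log B_N)(-B_N),
\]
where $B_N=\Supp(g_Y^*B_Y)$. Then, separately, take a log resolution $\tilde Y\to N$; the hypothesis gives $R^1\tilde f_*\Omega^i_{\tilde Y}(\log B_{\tilde Y})(-B_{\tilde Y})=0$, and since $N$ is a normal surface the cokernel of $h_*\Omega^i_{\tilde Y}(\log B_{\tilde Y})(-B_{\tilde Y})\hookrightarrow \Omega^{[i]}_N(\log B_N)(-B_N)$ is supported in dimension $0$, which is exactly what lets you deduce $R^1(f')_*\Omega^{[i]}_N(\log B_N)(-B_N)=0$ and then conclude by taking $R^1f_*$ of the split injection. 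Your ``bookkeeping'' paragraph gestures at some of these compatibilities, but without inserting the normal intermediate model $N$ the trace-based splitting you rely on simply does not exist.
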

\begin{proof}
    We discuss only the case of Vanishing (I-i) since the other is similar.
    Let $f\colon Y\to X$ be a log resolution of $(X,B)$ with $E\coloneqq \Exc(f)$ and $B_Y\coloneqq f_{*}^{-1}B+E$.
    We aim to prove that
    \[
    R^1f_{*}\Omega^i_Y(\log\,B_Y)(-B_Y)=0.
    \]
    Let $Y'$ be the normalization of the component of the fiber product dominating $X'$, and let $f'\colon Y'\to X'$ and $g_Y\colon Y'\to Y$ be compositions of the normalization and projections.
    Then $g_Y\colon Y'\to Y$ is a finite Galois cover degree prime to $p$.
    Let $B_{Y'}\coloneqq (f')_{*}^{-1}B'+\Exc(f')=\Supp(g_Y^{*}B_Y)$.
   Let $h\colon \tilde{Y} \to Y$ be a log resolution of $(Y',B_{Y'})$ 
   with $B_{\tilde{Y}}\coloneqq h^{-1}_{*}B_{Y'}+\Exc(h)$,
   and $\tilde{f}\coloneqq f'\circ h$.
   We now have the following commutative diagram:
    \begin{equation*}
    \xymatrix@C=80pt{
    \tilde{Y}\ar[r]^-{h}\ar[rd]_-{\tilde{f}}& Y'\ar[r]^{g_Y} \ar[d]^{f'} & Y\ar[d]^-{f}\\ 
      &  X'\ar[r]^-{g}   & X.}
\end{equation*}
    Since $\tilde{f}\colon \tilde{Y}\to X'$ be a log resolution of $(X',B')$ and $B_{\tilde{Y}}=\tilde{f}_{*}^{-1}B'+\Exc(\tilde{f})$, we have 
    \[
    R^1\tilde{f}_{*}\Omega^i_{\tilde{Y}}(\log\,B_{\tilde{Y}})(-B_{\tilde{Y}})=0
    \] 
    by assumption. 
    By the spectral sequence, we have an injective map
    \[
    R^1(f')_{*}h_{*}\Omega^i_{\tilde{Y}}(\log\,B_{\tilde{Y}})(-B_{\tilde{Y}})\hookrightarrow R^1\tilde{f}_{*}\Omega^i_{\tilde{Y}}(\log\,B_{\tilde{Y}})(-B_{\tilde{Y}})=0.
    \]
    We define a coherent sheaf $\mathcal{C}$ as a cokernel of the natural inclusion 
    \[
    h_{*}\Omega^i_{\tilde{Y}}(\log\,B_{\tilde{Y}})(-B_{\tilde{Y}})\hookrightarrow \Omega^{[i]}_{Y'}(\log\,B_{Y'})(-B_{Y'}).
    \]
    Then $\Supp(\mathcal{C})\subset h(\Exc(h))$, and in particular, $\dim\,\Supp(\mathcal{C})\leq 0$.
    Thus, we have 
    \[
    R^{1}(f')_{*}\Omega^{[i]}_{Y'}(\log\,B_{Y'})(-B_{Y'})=0.
    \]
    By Lemma \ref{lem:tame ramification} and \cite[Lemma 2.1]{Totaro(logBott)}, the pullback map 
    \[
    \Omega_Y^i(\log\,B_Y)(-B_Y)\hookrightarrow (g_{Y})_{*}\Omega^{[i]}_{Y'}(\log\,B_{Y'})(-B_{Y'})
    \] 
    splits.
    Taking $R^{1}f_{*}$, we have a split injection
    \[
    R^{1}f_{*}\Omega^i_Y(\log\,B_Y)(-B_Y)\hookrightarrow g_{*}R^1(f')_{*}\Omega^{[i]}_{Y'}(\log\,B_{Y'})(-B_{Y'})=0,
    \]
    and thus we conclude.
\end{proof}

\subsection{Reduction to the toric or Gorenstein case.}

In the previous subsection, we observed that Vanishing (I-i) and (II) can be reduced to finite Galois covers degree prime to $p$.
In this subsection, we apply this fact to show that Theorem \ref{Introthm:local vanishing for F-pure} can be reduced to the case where $(X,B)$ is toric or where $X$ is Gorenstein and $B=0$.
From now, we freely use the classification of lc surface singularities in \cite[Section 3.3]{Kol13}. The list in \cite[Section 7.B]{Gra} is also useful.

\begin{defn}
Let $G$ be an affine group scheme of finite type.
We say that $G$ is a \textit{linearly reductive group scheme} if every linear representation of $G$ is semi-simple.
\end{defn}

\begin{defn}\label{def:quotient singularity}
Let $G$ a finite group scheme.
A singularity $(x\in X)$ of a variety $X$ is said to be a \textit{quotient singularity by $G$} 
if there exists a faithful action of $G$ on $\Spec k\llbracket x_1,\ldots,x_d\rrbracket$ fixing the closed point such that $\sO_{X,x}^{\wedge}\cong k\llbracket x_1,\ldots,x_d\rrbracket^{G}$.
\end{defn}

\begin{defn}
We say that a singularity $(x\in X,B)$ such that $X$ is a normal variety and $B$ is a reduced divisor is \textit{toric} if there exists 
some representative $(x\in X,B)$ such that $(X,B)$ is a toric pair.
\end{defn}

\begin{lem}\label{lem:quotient sing is toric up to Galois}
    Let $(x\in X)$ be a quotient singularity by a finite linearly reductive group scheme $G$.
    Then there exists a finite Galois cover $g\colon (x'\in X')\to (x\in X)$ of singularities of degree prime to $p$ such that $(x'\in X')$ is a toric singularity.
\end{lem}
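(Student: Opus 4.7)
The plan is to decompose $G$ via the structure theorem for finite linearly reductive group schemes, linearize the $G$-action on the formal neighborhood, and then take the quotient by the connected (diagonalizable) part. I first invoke Nagata's theorem on finite linearly reductive group schemes over an algebraically closed field of characteristic $p > 0$: $G$ fits into a short exact sequence $1 \to G^0 \to G \to H \to 1$, with $G^0$ diagonalizable (Cartier dual to a finite abelian $p$-group) and $H := G/G^0$ a constant group of order prime to $p$.

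Next, I linearize the $G$-action on $R := k\llbracket x_1,\ldots,x_d\rrbracket$. Setting $V := \mathfrak{m}/\mathfrak{m}^2$, the $G$-equivariant surjection $\Hom_k(V,\mathfrak{m}) \twoheadrightarrow \operatorname{End}_k(V)$ remains surjective after taking $G$-invariants (by exactness of invariants under a linearly reductive group scheme), so $\mathrm{id}_V$ lifts to a $G$-equivariant section $V \hookrightarrow \mathfrak{m}$. The induced $G$-equivariant map $k\llbracket V\rrbracket \to R$ between regular complete local rings of the same dimension is an isomorphism. After this change of coordinates, $G$ acts on $R$ through a linear representation on $V$, and this action extends to an algebraic linear $G$-action on $\mathbb{A}^d := \Spec \Sym(V)$ fixing the origin.

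Since $G^0$ is diagonalizable, $V$ decomposes as a direct sum of $G^0$-character eigenspaces; choosing a compatible basis, $G^0$ acts on the coordinates by characters $\chi_1,\ldots,\chi_d$, i.e., as a subgroup scheme of the torus $\mathbb{G}_m^d$. Hence $Y := \Spec k[x_1,\ldots,x_d]^{G^0}$ is an affine toric variety with a toric singularity at the image $y$ of the origin. As $G^0$ is normal in $G$, the induced $H$-action on $Y$ satisfies $Y/H \cong \Spec k[x_1,\ldots,x_d]^G$. Because invariants under a linearly reductive group commute with flat base change, the completion of $k[x_1,\ldots,x_d]^G$ at the image of the origin equals $R^G \cong \sO_{X,x}^{\wedge}$; thus $(x \in X)$ and the corresponding point of $Y/H$ share a completion and hence a common \'etale neighborhood. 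The induced map $Y \to Y/H$ is then a Galois cover with group $H$ of order prime to $p$, realizing $(x \in X)$ as a prime-to-$p$ Galois quotient of the toric singularity $(y \in Y)$.

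The main obstacle is the linearization step: producing a genuinely $G$-equivariant splitting of $\mathfrak{m} \twoheadrightarrow V$ is where linear reductivity of $G$ is used essentially (it would fail for non-linearly-reductive group schemes such as $\alpha_p$). A secondary technicality, handled by standard commutative algebra, is the identification of the completion of $k[x_1,\ldots,x_d]^G$ at the origin with $R^G$ (using that $G$ permutes the fibers over the origin transitively and that linearly reductive invariants commute with flat base change), so that the algebraic Galois cover $Y \to Y/H$ genuinely represents a Galois cover of singularities in the sense of the paper.
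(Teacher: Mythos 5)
Your proof follows essentially the same route as the paper's: decompose $G$ via the connected--\'etale sequence, observe that $G^{\circ}$ is diagonalizable and $H=G/G^{\circ}$ is a constant group of order prime to $p$, linearize the action, and take $\mathbb{A}^d_k/G^{\circ}\to\mathbb{A}^d_k/G$ as the desired Galois cover onto a common \'etale model of $(x\in X)$. The paper defers the linearization and algebraization details to the proof of \cite[Theorem 2.12]{Kaw4}, which you spell out directly; the argument is correct.
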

\begin{proof}
   The assertion follows from the proof of \cite[Theorem 2.12]{Kaw4}, but
   we provide a sketch of proof for the convenience of the reader.
   By the Artin approximation, we can take $(o \in \mathbb{A}_k^d/G)$ as a representative of a singularity $(x\in X)$.
   Let $G^{\circ}$ be the connected component containing the identity, which is a normal subgroup scheme of $G$.
   Then $G^{\circ}$ is diagonalizable and $G/G^{\circ}$ is finite order prime to $p$ (see \cite[Section 2]{Hashimoto}).
   Since the natural torus action on $\mathbb{A}_k^d$ descends to $\mathbb{A}_k^d/G^{\circ}$, the quotient $\mathbb{A}_k^d/G^{\circ}$ is toric.
   Therefore, the induced map $g\colon (o\in \mathbb{A}_k^d/G^{\circ})\to (x\in X)$ is the desired one.
\end{proof}

\begin{lem}\label{lem:F-pure klt is quotient sing}
    Let $(x\in X)$ be a normal surface singularity.
    If $(x\in X)$ is $F$-pure, klt and not canonical, then $(x \in X)$ is a quotient singularity by a finite linearly reductive group scheme.
\end{lem}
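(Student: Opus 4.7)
The plan is to combine the classification of klt surface singularities with descent of $F$-purity along the canonical cover.

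Since $(x\in X)$ is klt but not canonical, its Gorenstein index $n$ is at least $2$. My first step would be to show that $\gcd(n,p)=1$, i.e., the index is tame. This is the expected conclusion of general $F$-pure $\Q$-Gorenstein descent (of Hara--Watanabe type): if $p\mid n$, then the index-one cover is wildly ramified in codimension one, which is incompatible with the existence of a Frobenius splitting. In the surface case one can alternatively verify tameness directly from the explicit classification in \cite[Section~3.3]{Kol13}. Once tameness of the index is established, the index-one cover $\pi\colon(x'\in X')\to(x\in X)$ is a $\mu_n$-Galois cover \'etale in codimension one, with $\mu_n$ a finite \'etale linearly reductive group scheme; moreover, $F$-purity descends along tame covers, so $(x'\in X')$ is again $F$-pure. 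Being Gorenstein and klt, $(x'\in X')$ is a rational double point.

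Next, I would invoke the classification of klt surface singularities \cite[Section~3.3]{Kol13} (see also \cite[Section~7.B]{Gra}) to deduce that the index-one cover of a non-canonical klt surface singularity is either smooth or of type $A_{m-1}$ for some $m\geq 2$. In the smooth case, $(x\in X)\cong\mathbb{A}^2/\mu_n$ and we are already done. In the $A_{m-1}$ case, $(x'\in X')\cong\mathbb{A}^2/\mu_m$ via the diagonal action $(x,y)\mapsto(\zeta x,\zeta^{-1}y)$, which is a valid linearly reductive quotient presentation in every characteristic since $\mu_m$ is diagonalizable. Composing the two quotients presents $(x\in X)\cong\mathbb{A}^2/G$ for a finite group scheme $G$ fitting into a short exact sequence
\[
1\to\mu_m\to G\to\mu_n\to 1.
\]
Both $\mu_m$ and $\mu_n$ are linearly reductive (the former because diagonalizable, the latter because \'etale of order prime to $p$), and an extension of finite linearly reductive group schemes is again finite linearly reductive (by Hashimoto's classification cited earlier in the paper). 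Hence $G$ is finite linearly reductive, finishing the proof.

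The principal obstacle is the first step: the passage from $F$-purity of $(x\in X)$ to tameness of its Gorenstein index. While this is well known for strongly $F$-regular $\Q$-Gorenstein singularities, for the merely $F$-pure hypothesis a more delicate argument or explicit appeal to the klt surface classification is needed. A secondary subtlety is lifting the $\mu_n$-action on $X'$ to an action on $\mathbb{A}^2$ normalising the $\mu_m$-subgroup, needed to assemble the extension $G$ as a subgroup scheme of $GL_2$; this is standard by rigidity of the $A_{m-1}$ presentation but warrants a brief check.
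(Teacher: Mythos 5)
Your proposal takes a genuinely different route from the paper, but it contains a substantive gap. The paper's proof is two citations long: by \cite[Theorem~1.2]{Hara(two-dim)}, an $F$-pure, klt, non-canonical surface singularity is automatically $F$-regular; and by \cite[Theorem~1.6]{LMM}, $F$-regular surface singularities are precisely quotient singularities by finite linearly reductive group schemes. Your approach instead tries to build the group scheme $G$ by hand out of the index-one cover.

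The central gap is the claim that ``the index-one cover of a non-canonical klt surface singularity is either smooth or of type $A_{m-1}$.'' This is false. Non-canonical klt surface singularities of dihedral, tetrahedral, octahedral and icosahedral type exist, and their index-one covers are of type $D$ or $E$. For a concrete example over $\C$: take $G=\langle\tilde T,\,\zeta\cdot\mathrm{id}\rangle\subset GL_2$, where $\tilde T$ is the binary tetrahedral group and $\zeta$ a primitive cube root of unity; then $G\cap SL_2=\tilde T$, so $\C^2/G$ is non-canonical klt with index-one cover $E_6$. Your argument therefore breaks at the point where you want to write the index-one cover as $\mathbb A^2/\mu_m$. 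Correcting the classification does not rescue the proof, because once the cover can be of type $D$ or $E$ you need to know that an $F$-pure RDP of that type is itself a linearly reductive quotient -- and that is false in general (the paper's own remark points out $E_8^1$ in characteristic $5$ is $F$-pure but not $F$-regular, hence not a linearly reductive quotient). The missing ingredient is exactly the passage from $F$-pure to $F$-regular, which is the whole content of the paper's first citation. Once you have $F$-regularity, \cite[Theorem~1.6]{LMM} gives the linearly reductive presentation directly, so the index-one cover and the extension analysis become superfluous.

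Two smaller points: the statement that ``if $p\mid n$ the index-one cover is wildly ramified in codimension one'' is not an accurate description -- the $\mu_n$-cover is \'etale in codimension one by construction, it is just that $\mu_n$ is not \'etale as a group scheme when $p\mid n$. And the direction you use is ascent (not descent) of $F$-purity along a cover \'etale in codimension one, which is fine via \cite[Theorem~4.8]{Hara-Watanabe}, as the paper itself uses elsewhere.
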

\begin{proof}
    By \cite[Theorem 1.2]{Hara(two-dim)}, it follows that $(x\in X)$ is $F$-regular.
    Then the assertion follows from \cite[Theorem 1.6]{LMM}.
\end{proof}

Let $(x\in X,B)$ be a singularity such that $K_X+B$ is $\Q$-Cartier of index $m$ prime to $p$.
Then we can construct a $\mu_m$-cover $g\colon (x'\in X',B')\to (x\in X,B)$ of singularities such that $K_{X'}+B'$ is Cartier and $g$ is \'etale in codimension one, which is called the \textit{index one cover of $(x\in X,B)$}. We refer to \cite[Definition 2.49]{Kol13} for details.

\begin{lem}\label{lem:reduction of F-pure dlt case}
    Let $(x\in X,B)$ be a surface singularity such that $B$ is reduced.
    If $(x\in X,B)$ is $F$-pure and plt,
    then one of the following holds:
    \begin{enumerate}
        \item[\textup{(1)}] $B=0$ and $(x\in X)$ is $F$-pure canonical.
        \item[\textup{(2)}] There exists a finite Galois cover $g\colon (x'\in X',B')\to (x\in X,B)$ of degree prime to $p$ such that $(x'\in X',B')$ is a toric singularity, where $B'\coloneqq \Supp(g^{*}B)$.
    \end{enumerate}
\end{lem}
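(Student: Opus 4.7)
The plan is to split on whether $B = 0$ at $x$. If $B = 0$, then $(x \in X)$ is an $F$-pure klt surface singularity (plt with empty boundary reducing to klt). If it is moreover canonical, alternative (1) is immediate; otherwise, Lemma \ref{lem:F-pure klt is quotient sing} identifies $(x\in X)$ as a quotient singularity by a finite linearly reductive group scheme, and then Lemma \ref{lem:quotient sing is toric up to Galois} supplies a Galois cover of degree prime to $p$ from a toric singularity, yielding alternative (2) with $B' = 0$.

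Now suppose $B \neq 0$ at $x$. First I would observe that $(X, B)$ being $F$-pure forces $X$ itself to be $F$-pure: any splitting of $\sO_X \to F^e_*\sO_X((p^e-1)B)$ restricts along $F^e_*\sO_X \hookrightarrow F^e_*\sO_X((p^e-1)B)$ to a splitting of $\sO_X \to F^e_*\sO_X$. Hence $X$ is $F$-pure klt, hence $F$-regular by Hara's theorem (as used in the proof of Lemma \ref{lem:F-pure klt is quotient sing}), and therefore a quotient singularity by a finite linearly reductive group scheme. I would then invoke the classification of plt surface singularities with reduced nonzero boundary (\cite[Section 3.3]{Kol13}, \cite[Section 7.B]{Gra}) to conclude that, formally at $x$,
\[
(X, B) \cong \bigl(\Spec k\llbracket u, v\rrbracket^{\mu_n},\ \{v = 0\}/\mu_n\bigr),
\]
with $\mu_n$ acting diagonally by weights $(1, a)$ for some $a$ coprime to $n$.

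To finish, imitating the proof of Lemma \ref{lem:quotient sing is toric up to Galois}, I would write $n = n' p^k$ with $\gcd(n', p) = 1$, decompose $\mu_n \cong \mu_{n'} \times \mu_{p^k}$ (the second factor being the connected component of the identity), and set
\[
(X', B') \coloneqq \bigl(\Spec k\llbracket u, v\rrbracket^{\mu_{p^k}},\ \{v = 0\}/\mu_{p^k}\bigr).
\]
Since $\mu_{p^k}$ is diagonalizable, the ambient $\mathbb{G}_m^2$-action on $\mathbb{A}^2$ descends to $X'$, making the pair toric; the induced morphism $g\colon (X', B') \to (X, B)$ is then Galois with group $\mu_{n'}$, of order prime to $p$, giving alternative (2).

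The main obstacle I anticipate is justifying the explicit cyclic-quotient form in positive characteristic: the dual-graph classification of plt surface pairs is characteristic-free, but one must combine it with $F$-regularity and linear reductivity to pin down the group scheme as $\mu_n$ (excluding, for instance, wild $\Z/p$-type étale covers) and to identify $B$ as a torus-invariant axis image. Once that structure is in hand, the $\mu$-decomposition and the verification that $(X', B')$ is toric are routine.
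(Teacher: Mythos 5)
Your handling of the $B=0$ case is essentially the paper's argument (the paper splits on Gorenstein vs.~non-Gorenstein rather than canonical vs.~non-canonical, but for klt surface singularities these are the same dichotomy), invoking Lemmas~\ref{lem:F-pure klt is quotient sing} and~\ref{lem:quotient sing is toric up to Galois} in exactly the same way.

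For $B\neq 0$ you take a genuinely different and longer route, and the obstacle you flag at the end is a real gap, not a routine detail. The paper's proof in this case uses \emph{no} $F$-purity at all: it cites \cite[3.35 (1)]{Kol13} to identify $(x\in X,B)$ as a plt cyclic quotient singularity (a dual-graph classification, characteristic-free), and then \cite[Theorem 2.13]{Lee-Nakayama} to conclude that such a pair is already \emph{toric} --- so alternative~(2) holds with $g$ the identity. The content of Lee--Nakayama is precisely the step you describe as the ``main obstacle'': showing that, even when $p$ divides the order $n$, the singularity together with its boundary has the explicit $\mu_n$-quotient form $\bigl(\Spec k\llbracket u,v\rrbracket^{\mu_n},\{v=0\}/\mu_n\bigr)$ and hence is toric. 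Your route (pass to $F$-regularity via Hara, then to linearly reductive quotient singularities via LMM, then re-derive the toric form) both overshoots and undershoots: it imports $F$-purity where the paper needs none, and it still leaves you needing to identify the group scheme and to recognize $B$ as the image of a coordinate axis, which is exactly what \cite[Theorem 2.13]{Lee-Nakayama} supplies. Once that theorem is in hand, your explicit prime-to-$p$ cover $X'=\Spec k\llbracket u,v\rrbracket^{\mu_{p^k}}$ is correct but unnecessary --- $(X,B)$ is already toric. I would recommend replacing the detour through $F$-regularity with a direct citation of \cite[3.35 (1)]{Kol13} and \cite[Theorem 2.13]{Lee-Nakayama}, as the paper does, which closes the gap and also makes manifest that Remark~\ref{rem:reduction of dlt case} (replacing $F$-purity by $p>5$) holds for free in the $B\neq 0$ case.
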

\begin{proof}
    Suppose that $B=0$ .
    If $X$ is Gorenstein, then (1) holds. If $X$ is not Gorenstein, then (2) holds by Lemmas \ref{lem:F-pure klt is quotient sing} and \ref{lem:quotient sing is toric up to Galois}.   
    If $B\neq 0$, then $(x\in X,B)$ is a plt cyclic quotient singularity \cite[3.35 (1)]{Kol13}, which is a toric singularity by \cite[Theorem 2.13]{Lee-Nakayama}. Thus, (2) holds. 
    We note that, in our situation, $B_2=0$ in the notation of \cite[Theorem 2.13]{Lee-Nakayama}, but we can reduce the $B_2\neq 0$ case as in \cite[Proof of Theorems 2.11 and 2.13]{Lee-Nakayama}, and thus can apply \cite[Theorem 2.13]{Lee-Nakayama}.
\end{proof}

\begin{rem}\label{rem:reduction of dlt case}
    In Lemma \ref{lem:reduction of F-pure dlt case}, we can replace the assumption that $(X,B)$ is $F$-pure by that $p>5$
    since every plt surface pair is $F$-pure if $p>5$ (\cite[Theorem 1.1]{Hara(two-dim)} and \cite[Theorem 4.5 (1)]{Hara-Watanabe}).
\end{rem}

\begin{lem}\label{lem:reduction of lc case}
    Let $(x\in X,B)$ be an lc surface singularity such that $B$ is reduced.
    Suppose that $(x\in X,B)$ is not plt and $p>5$.
    Then one of the followings holds:
    \begin{enumerate}
        \item[\textup{(1)}] $B=0$ and there exists a finite Galois cover $g\colon (x'\in X')\to (x\in X)$ of degree prime to $p$ such that $X'$ is Gorenstein lc.
        \item[\textup{(2)}] $B\neq 0$ and there exists a finite Galois cover $g\colon (x'\in X',B)\to (x\in X, B)$ of degree prime to $p$ such that $(x'\in X',B)$ is a toric singularity.
    \end{enumerate}
\end{lem}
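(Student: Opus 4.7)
The plan is to apply the classification of lc surface singularities and pairs from \cite[Section 3.3]{Kol13}, combined with the numerical observation that for $p>5$ every Cartier index appearing in that classification is at most $6$, hence prime to $p$. In both cases the cover $g$ will be obtained as an index one cover; the characteristic hypothesis is used exactly to ensure that such a cover is a (tame) Galois cover of degree prime to $p$.

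For Case (1) ($B=0$), the hypothesis ``not plt'' together with $B=0$ means that $(x\in X)$ is strictly lc, i.e., lc but not klt. By the classification, $K_X$ is $\Q$-Cartier of index $m\in\{1,2,3,4,6\}$, so $\gcd(m,p)=1$. I would take $g\colon (x'\in X')\to (x\in X)$ to be the index one cover of $K_X$ in the sense of \cite[Definition 2.49]{Kol13}. Then $g$ is a Galois $\mu_m$-cover (equivalently a $\Z/m$-cover, since $\mu_m\cong \Z/m$ when $(m,p)=1$), étale in codimension one; $X'$ is Gorenstein by construction; and the lc property is preserved because $g^{*}K_X=K_{X'}$ makes log discrepancies pull back.

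For Case (2) ($B\neq 0$), the strategy is parallel. Let $m'$ be the Cartier index of $K_X+B$; by the same classification $m'$ is bounded by a small integer, in particular prime to $p$ for $p>5$. Let $g\colon (x'\in X',B')\to(x\in X, B)$ with $B'\coloneqq \Supp(g^{*}B)$ be the index one cover of $K_X+B$. Then $g$ is Galois of degree $m'$, étale in codimension one, and the resulting pair $(X',B')$ is lc, not plt, with $K_{X'}+B'$ Cartier and $B'$ reduced and nonzero. I would then read off from the classification that every such Gorenstein lc non-plt pair with reduced nonzero boundary is formally a toric pair: the local analytic model at $x'$ is a nodal configuration such as $(\mathbb{A}^2,\{xy=0\})$ (possibly on a cyclic-quotient surface, which is itself toric), which is manifestly toric.

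The main obstacle is the case distinction in (2): one must verify that after the index one cover of $K_X+B$, the resulting Gorenstein lc non-plt pair with $B'\neq 0$ reduced is indeed toric in every entry of the classification list. This amounts to an inspection of the tables in \cite[Section 3.3]{Kol13} (or \cite[Section 7.B]{Gra}); for $B=0$ the passage to Gorenstein is immediate, but for $B\neq 0$ the bookkeeping requires going through the possible dual graphs and matching each surviving configuration with a toric singularity. The hypothesis $p>5$ is used precisely to ensure that every Cartier index appearing is prime to $p$ and that the characteristic-zero classification still applies in positive characteristic.
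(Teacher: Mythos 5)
Your proposal takes essentially the same route as the paper: classify the lc surface singularity via \cite[Section~3.3]{Kol13}, observe that for $p>5$ every Cartier index occurring in that classification lies in $\{1,2,3,4,6\}$ and is therefore prime to $p$, and then pass to an index one cover. Case~(1) in your writeup matches the paper exactly (the paper lists the non-Gorenstein types $(2,2,2,2)$, $(3,3,3)$, $(2,4,4)$, $(2,3,6)$; the simple elliptic and cusp cases are already Gorenstein, so $g=\mathrm{id}$ works).

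The one place you flag as an ``obstacle''—showing in Case~(2) that after the index one cover the resulting pair with $K_{X'}+B'$ Cartier, $B'\neq 0$ reduced, lc but not plt is toric—is settled in the paper by a two-line case split rather than a table search. An lc non-plt pair with $B\neq 0$ is either an lc cyclic quotient \cite[3.35~(2)]{Kol13} or a dihedral quotient \cite[3.35~(3)]{Kol13}. In the cyclic case the pair is already toric by \cite[Theorem~2.13]{Lee-Nakayama}, so no cover is needed. In the dihedral case the leaves of the dual graph have discrepancy $-1/2$, so the Cartier index of $K_X+B$ is exactly $2$; the index one cover is Galois of degree $2$ (prime to $p$ since $p>5$), and by \cite[Proposition~2.50]{Kol13} the covering pair is again lc and not plt with Cartier $K_{X'}+B'$ and $B'\neq 0$, which forces it to be an lc cyclic quotient and hence toric. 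Inserting this short argument in place of your ``read off from the classification'' step closes your proof; no new idea is needed.
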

\begin{proof}
    We first discuss the case where $B=0$.
    In this case, $(x\in X)$ is a rational singularity of type $(2,2,2,2)$ \cite[3.39.3]{Kol13} or of type $(3,3,3)$, $(2,4,4)$, $(2,3,6)$ \cite[3.39.4]{Kol13}. 
    Then the Gorenstein index belongs to $\{2,3,4,6\}$ by \cite[Chapter 3 (3.3.4)]{Flips-abundance}, and in particular, the index is prime to $p$.
    Thus, by taking $g$ as the index one cover of $(x\in X)$, we conclude that (1) holds.
    
    Next, suppose that $B$ is non-zero. In this case, 
    $(x\in X,B)$ is an lc cyclic quotient \cite[3.35 (2)]{Kol13} or a dihedral quotient \cite[3.35 (3)]{Kol13}.
    In the first case, $(x\in X,B)$ is toric by \cite[Theorem 2.13]{Lee-Nakayama}.
    In the latter case, the Cartier index of $K_X+B$ is two. This fact can be observed from the fact that the discrepancy of the exceptional leaves are $-1/2$ (see \cite[3.37]{Kol13} for the definition of leaves).
    Since $p>2$, we can take the index one cover $g\colon (x'\in X',B')\to (x\in X,B)$.
    Since $K_{X'}+B'$ is Cartier, all the discrepancy should be integer. 
    Moreover, $(X',B')$ is lc, but not plt \cite[Proposition 2.50]{Kol13}.
    Thus, $(x'\in X',B')$ have to be an lc cyclic quotient, which is toric as above. Thus, (2) holds.
\end{proof}

\begin{lem}\label{lem:reduction of F-pure lc case}
    Let $(x\in X,B)$ be an $F$-pure surface singularity such that $B$ is reduced.
    Suppose that $(X,B)$ is not plt at $x\in X$.
    Then one of the followings holds:
    \begin{enumerate}
        \item[\textup{(1)}] $B=0$ and there exists a finite Galois cover $g\colon (x'\in X')\to (x\in X)$ of degree prime to $p$ such that $X'$ is Gorenstein $F$-pure.
        \item[\textup{(2)}] $B\neq 0$ and there exists a finite Galois cover $g\colon (x'\in X',B)\to (x\in X, B)$ of degree prime to $p$ such that $(x'\in X',B)$ is a toric singularity.
    \end{enumerate}
\end{lem}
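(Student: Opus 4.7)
The plan is to mirror the proof of Lemma \ref{lem:reduction of lc case}, with the role of the hypothesis $p>5$ replaced by the $F$-purity assumption. Since $(X,B)$ is $F$-pure and $B$ is reduced, $(X,B)$ is lc by Remark \ref{rem:F-pure-to-lc}. The key observation underlying the whole argument is that $F$-purity forces the existence of some $e>0$ with $(p^e-1)(K_X+B)$ Cartier, so the Cartier index of $K_X+B$ divides $p^e-1$ and in particular is prime to $p$. This substitutes for the $p>5$ hypothesis used in Lemma \ref{lem:reduction of lc case}.

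In the case $B=0$, the non-plt assumption together with $B=0$ forces $(x\in X)$ to be strictly lc, so by the classification in \cite[Section 3.3]{Kol13} it is either simple elliptic, a cusp, or a rational strictly lc singularity of type $(2,2,2,2)$, $(3,3,3)$, $(2,4,4)$, or $(2,3,6)$. In every case $K_X$ is $\Q$-Cartier of index $m\in\{1,2,3,4,6\}$, and by the key observation $m$ is prime to $p$. Let $g\colon (x'\in X')\to (x\in X)$ be the index one cover: it is a $\mu_m$-Galois cover of degree $m$ coprime to $p$, \'etale in codimension one, with $K_{X'}$ Cartier and hence $X'$ Gorenstein. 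Because $g$ is finite and quasi-\'etale of degree prime to $p$, $F$-purity is preserved under $g$, so $X'$ is $F$-pure and (1) holds.

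In the case $B\neq 0$, the classification of non-plt lc surface pairs with reduced $B\neq 0$ (see \cite[3.35 (2), (3)]{Kol13}) shows that $(x\in X,B)$ is either an lc cyclic quotient or a dihedral quotient. In the cyclic case $(x\in X,B)$ is already toric by \cite[Theorem 2.13]{Lee-Nakayama}, so we take $g$ to be the identity (as in the proof of Lemma \ref{lem:reduction of lc case}). In the dihedral case the two exceptional leaves have discrepancy $-1/2$, so the Cartier index of $K_X+B$ is exactly $2$; the key observation then gives $p\neq 2$, and the index one cover $g\colon (x'\in X',B')\to (x\in X,B)$ of degree $2$ exists. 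On the cover $K_{X'}+B'$ is Cartier, and $(X',B')$ is again lc but not plt by \cite[Proposition 2.50]{Kol13}, so it must be an lc cyclic quotient, hence toric by \cite[Theorem 2.13]{Lee-Nakayama}. The same remark on $B_2$ as in the proof of Lemma \ref{lem:reduction of F-pure dlt case} applies.

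The main obstacle I foresee is justifying that $F$-purity passes to the index one cover in the $B=0$ case. This should be a standard consequence of the preservation of $F$-purity under finite quasi-\'etale covers whose degree is prime to $p$, but it is the one non-combinatorial ingredient; the remainder of the argument is a purely formal reduction via the classification of non-plt lc surface pairs together with the numerical fact that $F$-purity forces the Cartier index of $K_X+B$ to be coprime to $p$.
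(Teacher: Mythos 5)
Your proof follows the same overall skeleton as the paper's (reduce via the classification of non-plt lc surface pairs, take index one covers, use preservation of $F$-purity under quasi-\'etale covers), but it replaces the paper's two classification inputs by a single ``key observation,'' and that observation is \emph{not} a valid general principle.

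You claim that $F$-purity of $(X,B)$ forces the existence of $e>0$ with $(p^e-1)(K_X+B)$ Cartier, hence that the Cartier index of $K_X+B$ is prime to $p$. This is false as a general fact about $F$-pure pairs. A counterexample is $X=\mathbb{A}^2/\mu_p$ with weights $(1,1)$ (the cone over the rational normal curve of degree $p$) for $p$ odd: $\mu_p$ is linearly reductive, so the invariant ring is a direct summand of $k[x,y]$ and one checks directly that the composite of the natural Reynolds retraction $k[x,y]\to R$ with the standard Frobenius splitting of $k[x,y]$ gives an $R$-linear splitting of $R\hookrightarrow R^{1/p}$; thus $X$ is $F$-pure (indeed strongly $F$-regular). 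Yet its Gorenstein index is $p/\gcd(p,2)=p$. In particular $(p^e-1)K_X$ is never Cartier. What one actually gets from a splitting $\phi\colon F^e_*\sO_X((p^e-1)B)\to\sO_X$ is an effective divisor $\Delta_\phi\geq B$ with $(p^e-1)(K_X+\Delta_\phi)\sim 0$; this controls $K_X+\Delta_\phi$, not $K_X+B$, so your deduction does not go through.

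The paper instead obtains the needed index statements from classification: in the $B=0$ case it reads off from Hara's classification of two-dimensional $F$-pure singularities \cite[Theorem 1.2]{Hara(two-dim)} that $p$ does not divide the Gorenstein index (which for strictly lc surface germs lies in $\{1,2,3,4,6\}$), and in the dihedral case it cites \cite[Theorem 4.5 (2)]{Hara-Watanabe} to exclude $p=2$ directly. Your argument happens to reach the same conclusions because in the non-plt lc setting the possible indices are so restricted, but as written it rests on a false lemma; replacing the ``key observation'' by the classification references restores the proof, and the rest of your argument (index one covers, \cite[Proposition 2.50]{Kol13}, toricness of lc cyclic quotients, preservation of $F$-purity under quasi-\'etale covers of degree prime to $p$ as in \cite[Theorem 4.8]{Hara-Watanabe}) matches the paper.
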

\begin{proof}
   First, we recall that $(X,B)$ is lc (Remark \ref{rem:F-pure-to-lc}). Thus, the essentially same proof as Lemma \ref{lem:reduction of lc case} works as follows:
   
   When $B=0$, we can observe that $p$ does not divide the Gorenstein index of $X$ by \cite[Theorem 1.2]{Hara(two-dim)}, and thus we can take $g$ as the index one cover of $(x\in X)$. Moreover, $F$-purity is preserved by finite covers that is \'etale in codimension one \cite[Theorem 4.8]{Hara-Watanabe}. Thus, (1) holds.
   
   When $B\neq 0$, we can reduced to the case $(x\in X,B)$ is an lc cyclic quotient since a dihedral quotient singularity is not $F$-pure if $p=2$ by \cite[Theorem 4.5 (2)]{Hara-Watanabe}. Thus, we conclude (2) by the fact that an lc cyclic quotient is toric.
\end{proof}

\subsection{The toric or Gorenstein case}

In the previous subsection, we reduced Theorem \ref{Introthm:local vanishing for F-pure} to the case where $(X,B)$ is toric or where $X$ is Gorenstein and $B=0$.
We have settled the toric case in Proposition \ref{prop:toric cases}.
In this section, we observe the Vanishing (I-1) for the latter case follows from the results by Hirokado \cite{Hirokado} and  Wahl \cite{Wahl}.

\begin{prop}\label{prop:toric}
    Let $(X,B)$ be a pair of a normal surface and a reduced divisor.
    Suppose that a singularity $(x\in X,B)$ is toric.
    Then $(X,B)$ satisfies Vanishing (I-i) at $x\in X$ for all $i\geq 0$.
\end{prop}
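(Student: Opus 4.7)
The plan is to reduce the statement to Proposition \ref{prop:toric cases} by observing that the vanishing in question is étale-local, so that it depends only on the singularity $(x\in X,B)$ in the sense of Subsection \ref{subsection:Notation and terminologies}.

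By the definition of a toric singularity, there exists a common étale neighborhood $(u\in U,B_U)$ of $(x\in X,B)$ and of a pointed toric pair $(x^{\mathrm{t}}\in X^{\mathrm{t}},B^{\mathrm{t}})$; let $\pi\colon U\to X$ and $\pi'\colon U\to X^{\mathrm{t}}$ denote the étale maps underlying this common refinement. I would then take any log resolution $f\colon Y\to X$ of $(X,B)$ and a toric log resolution $f^{\mathrm{t}}\colon Y^{\mathrm{t}}\to X^{\mathrm{t}}$ of $(X^{\mathrm{t}},B^{\mathrm{t}})$, and form their base changes $f_U\colon Y_U\to U$ and $f'_U\colon Y'_U\to U$ along $\pi$ and $\pi'$, respectively. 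Both $f_U$ and $f'_U$ are log resolutions of $(U,B_U)$ because étale morphisms preserve smoothness, snc boundaries, and the construction of $B_Y$; moreover, the formation of $\Omega^i(\log)(-)$ and of $R^1(-)_*$ commutes with étale base change.

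Consequently, the stalk at $x$ of $R^1 f_*\Omega^i_Y(\log B_Y)(-B_Y)$ coincides with the stalk at $u$ of $R^1 (f_U)_*\Omega^i_{Y_U}(\log B_{Y_U})(-B_{Y_U})$, and likewise the stalk at $x^{\mathrm{t}}$ of $R^1 f^{\mathrm{t}}_*\Omega^i_{Y^{\mathrm{t}}}(\log B_{Y^{\mathrm{t}}})(-B_{Y^{\mathrm{t}}})$ coincides with the analogous stalk at $u$ computed via $f'_U$. Proposition \ref{prop:toric cases} makes the latter vanish, while Proposition \ref{prop:independence} applied to $(U,B_U)$ identifies the two stalks at $u$ coming from the distinct log resolutions $f_U$ and $f'_U$; chasing the identifications then gives the desired vanishing of the stalk of $R^1 f_*\Omega^i_Y(\log B_Y)(-B_Y)$ at $x$.

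The argument is formal once the base-change compatibilities are recorded, so the only point requiring care is that $\Omega^i_Y(\log B_Y)$ and $\sO_Y(-B_Y)$ pull back as expected along the étale maps $Y_U\to Y$ and $Y'_U\to Y^{\mathrm{t}}$. This is routine because étale morphisms preserve both the sheaf of logarithmic differentials with respect to an snc divisor and the line bundles associated to its components, so I do not anticipate any genuine obstacle.
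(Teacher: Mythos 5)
Your proposal is correct and takes essentially the same approach as the paper's proof, which is just a one-line reduction: replace $(X,B)$ by its étale-local toric model and invoke Proposition~\ref{prop:toric cases}, with the étale-locality and the independence of the choice of log resolution justified by the remark following Definition~\ref{def:local vanishings} and Proposition~\ref{prop:independence}. You simply spell out the base-change bookkeeping that the paper leaves implicit.
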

\begin{proof}
    Since we may assume that $(X,B)$ is a toric pair (Remark \ref{rem:vanishing for singularities}), the assertion follows from Proposition \ref{prop:toric cases}.
\end{proof}

\begin{prop}\label{prop:canonical}
    Let $X$ be a normal surface.
    Suppose that a singularity $(x\in X)$ is $F$-pure and canonical.
    Then $X$ satisfies Vanishing (I-1) at $x\in X$.
\end{prop}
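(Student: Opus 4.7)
The strategy is to recognize $(x \in X)$ as a tame rational double point (RDP) and then invoke the vanishing results of Wahl and Hirokado for such resolutions.

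First, I would note that over a perfect field a canonical surface singularity is automatically an RDP (in particular, it is Gorenstein and rational, with an ADE configuration of $(-2)$-curves as the exceptional divisor on the minimal resolution). The additional $F$-pure hypothesis restricts the possible types via Hara's classification \cite{Hara(two-dim)}: only those RDPs whose defining ADE type is unaffected by the characteristic $p$ can be $F$-pure. Accordingly $(x \in X)$ is a \emph{tame} RDP, whose exceptional configuration and Cartier-operator computations mirror those in characteristic zero.

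Next, I would pass to the minimal resolution $f \colon Y \to X$. For an RDP this is already a log resolution of $(X, 0)$: the exceptional divisor $E = \Exc(f)$ is an snc ADE configuration of smooth $(-2)$-curves, and $K_Y = f^{*}K_X$ since the singularity is Gorenstein canonical, so $\omega_Y$ is locally trivial near $E$. By Proposition \ref{prop:independence} the Vanishing (I-1) at $x$ may be tested on this particular $f$, so it suffices to establish
\[
R^{1}f_{*}\Omega^{1}_{Y}(\log\,E)(-E)_{x} = 0.
\]

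At this point I would invoke Wahl \cite{Wahl} and Hirokado \cite{Hirokado}. Wahl's theorem gives, in characteristic zero, an explicit computation of $h^{1}$ of $\Omega^{1}_{Y}(\log E)(-E)$ on the minimal resolution of a rational surface singularity, and yields zero for any ADE configuration of $(-2)$-curves. Hirokado carries out the parallel analysis in positive characteristic for tame RDPs, where the relevant Cartier-operator / deformation computations go through unchanged precisely because $p$ does not divide the invariants of the exceptional configuration. Combining this with Step 1 gives the claim.

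The main obstacle will be aligning the ``tameness'' condition under which Wahl's and Hirokado's vanishings are actually stated with the hypothesis ``$F$-pure'' in the proposition; once that identification is made via \cite{Hara(two-dim)} the proof reduces to a direct citation, with no explicit case-by-case analysis of ADE types required in the text of the proposition itself.
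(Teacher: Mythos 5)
Your overall strategy — restrict to the minimal resolution, reduce to an RDP, invoke Hirokado's result for RDPs in characteristic $p$ — is the same as the paper's. But there are two genuine gaps.

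First, your identification ``$F$-pure $\Leftrightarrow$ tame $\Leftrightarrow$ ADE type unaffected by $p$'' is not correct and it does not hold up under Hara's classification. For example, $E_8^1$ in characteristic $5$ is $F$-pure (and is used in the paper, Remark after Theorem~\ref{Introthm:Local vanishing to F-inj}), yet it is one of Artin's non-classical forms and its Cartier/deformation theory is not simply ``the same as in characteristic zero.'' So you cannot replace the case check with the heuristic that $F$-purity means the computation ``goes through unchanged.'' What the paper actually does is use Hirokado's \emph{complete} classification (\cite[Theorem~1.1(ii)]{Hirokado}) of all RDPs for which $H^1_E(T_Y(-\log E)(E))\neq 0$, and then directly compare that explicit list with the explicit list of $F$-pure RDPs (from \cite[Table~1]{Kawakami-Takamatsu}) to verify they are disjoint. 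That cross-check is the substance of the argument and cannot be waved away.

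Second, you do not mention the translation that makes Hirokado's result applicable: since $(x\in X)$ is canonical and $f$ is the minimal resolution, $\omega_Y$ is trivial near $E$, so by local (Grothendieck) duality
\[
R^1f_*\Omega^1_Y(\log E)(-E) \cong H^1_E\bigl(T_Y(-\log E)(E)\bigr),
\]
and it is this group that Hirokado computes (his $S_Y$ is $T_Y(-\log E)$). Without this step the cited theorem does not directly address Vanishing~(I-1). A minor further point: citing Wahl here is unnecessary; the paper uses Wahl only in Proposition~\ref{prop:Gor lc} (simple elliptic and cusp cases), while the canonical/RDP case rests on Hirokado alone.
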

\begin{proof}
    Let $f\colon Y\to X$ be the minimal resolution at $x$ with $E\coloneqq \Exc(f)$.
    Since $(x\in X)$ is canonical and $f$ is minimal, we may assume that $\omega_Y\cong \sO_Y$.
    Then the formal duality yields that
    \[
    R^1f_{*}\Omega^1_Y(\log\,E)(-E)\cong H^1_E(T_Y(-\log\,E)(E)).
    \]
    In \cite[Theorem 1.1 (ii)]{Hirokado}, all the RDPs that satisfies $H^1_E(T_Y(-\log\,E)(E))\neq 0$ are classified, and we can confirm that they coincide with non-$F$-pure RDPs.
    Here, note that $S_Y$ in \cite{Hirokado} is $T_Y(-\log\,E)$ in our notation.
    The list of $F$-pure canonical singularities can be found in \cite[Table 1]{Kawakami-Takamatsu} for example.
\end{proof}

\begin{prop}\label{prop:Gor lc}
    Let $X$ be a normal surface.
    Suppose that a singularity $(x\in X)$ is Gorenstein lc and not canonical.
    Then $X$ satisfies Vanishing (I-1) at $x\in X$.
\end{prop}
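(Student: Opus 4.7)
The plan is to classify the singularities in question, reduce the desired vanishing to one involving $T_Y(-\log E)$ on the minimal resolution via formal Grothendieck--Serre duality, and then invoke Wahl's theorem.

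First I would use the classification of lc surface singularities (\cite[Section 3.3]{Kol13}): a Gorenstein lc singularity that is not canonical is either a \emph{simple elliptic} singularity (exceptional divisor of the minimal resolution is a smooth elliptic curve) or a \emph{cusp} (exceptional divisor is a cycle of smooth rational curves). In both cases $E \coloneqq \Exc(f)$ on the minimal resolution $f \colon Y \to X$ is a reduced SNC divisor, so $f$ is already a log resolution; by Proposition \ref{prop:independence} it suffices to establish Vanishing (I-1) on this particular $f$. Writing $K_Y = f^{*}K_X + \sum_i a_i E_i$ and intersecting with each $E_i$ via adjunction, one finds $a_i = -1$ for every component in both cases. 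Hence $K_Y + E = f^{*}K_X$, and since $(x \in X)$ is Gorenstein, $\omega_Y \cong \sO_Y(-E)$ on a formal (or \'etale) neighborhood of $E$.

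Next I would apply the theorem on formal functions together with formal Grothendieck--Serre duality along the contracted curve: for the locally free sheaf $\sF \coloneqq \Omega^1_Y(\log E)(-E)$,
\[
(R^1 f_{*}\sF)_x^{\wedge} = H^1(\widehat{Y}, \sF) \cong H^1_E(\sF^{\vee} \otimes \omega_Y)^{\vee},
\]
where $\widehat{Y}$ denotes the formal neighborhood of $E$. Computing the dual gives $\sF^{\vee} = T_Y(-\log E)(E)$, and tensoring with $\omega_Y \cong \sO_Y(-E)$ yields $\sF^{\vee} \otimes \omega_Y \cong T_Y(-\log E)$ near $E$. The proposition is therefore equivalent to
\[
H^1_E(T_Y(-\log E)) = 0.
\]

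Finally, this last vanishing is the content of Wahl's theorem for minimally elliptic surface singularities. The argument proceeds by explicit cohomological computation on $\widehat{Y}$ via the restriction exact sequence
\[
0 \to T_Y(-\log E)(-E) \to T_Y(-\log E) \to T_Y(-\log E)|_E \to 0,
\]
reducing matters to cohomology of line bundles on the elliptic curve $E$ (simple elliptic case) and on a cycle of $\PP^1$'s (cusp case). The main obstacle I anticipate is verifying that Wahl's original characteristic-zero arguments transplant to positive characteristic; this should be essentially formal, since the relevant computations depend only on the degrees of line bundles on the combinatorially fixed exceptional configuration $E$ and on the intersection numbers on $Y$, all of which are characteristic-independent. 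A secondary delicate point is the bookkeeping for the duality step, in particular the use of $\omega_Y(E) \cong f^{*}\omega_X \cong \sO_Y$ locally near $E$ to rewrite $\sF^{\vee} \otimes \omega_Y$ as $T_Y(-\log E)$.
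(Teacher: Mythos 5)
Your proposal is essentially the paper's argument: classify the singularity as simple elliptic or cusp, pass to a log resolution where all discrepancies equal $-1$ so that $\omega_Y \cong \sO_Y(-E)$ near $E$, apply duality to reduce Vanishing~(I-1) to $H^1_E(T_Y(-\log E)) = 0$, and invoke Wahl. Two points to tighten. First, you should take the \emph{minimal good resolution} (the minimal log resolution), not the minimal resolution: for a cusp whose cycle has a single component, the minimal resolution has a nodal rational curve as exceptional divisor, which is not snc, so one further blow-up at the node is needed (and one checks the discrepancy of the new exceptional curve is again $-1$). Second, your anticipated obstacle about transporting Wahl's argument to characteristic $p$ is not actually an issue: \cite{Wahl} is written over an algebraically closed field of arbitrary characteristic, and the paper simply cites \cite[Proposition~2.12(a)]{Wahl} when $E$ is a smooth elliptic curve and \cite[Proposition~2.14]{Wahl} when $E$ is a cycle of smooth rational curves, noting that the quantities $t_i = E_i\cdot(E-E_i)$ appearing there all equal $2$.
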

\begin{proof}
    In this case, $(x\in X)$ is a simple elliptic singularity \cite[3.39.1]{Kol13} or a cusp singularity \cite[3.39.2]{Kol13}.
    Let $f\colon Y\to X$ be the minimal good resolution at $x$ with $E\coloneqq \Exc(f)$.
    Here, we recall that the minimal good resolution is the minimal one among log resolutions.
    Since $(x\in X)$ is simple elliptic or cusp, every coefficient of a prime $f$-exceptional divisor in $f^{*}K_X-K_Y$ is equal to one, and thus we may assume that $E=f^{*}K_X-K_Y=-K_Y$.
    Then the formal duality yields that
    \begin{align*}
        R^1f_{*}\Omega^1_Y(\log\,E)(-E)=R^1f_{*}\Omega^1_Y(\log\,E)(K_Y)\cong H^1_E(T_X(-\log\,E)).
    \end{align*}
    If $E$ is irreducible, then $E$ is an elliptic curve and the vanishing of the last cohomology follows from \cite[Proposition 2.12 (a)]{Wahl}.
     If $E$ is not irreducible, then $E$ is a cycle of smooth rational curves. Then $t_i$ in \cite[(2.13.1)]{Wahl}, which is defined as $E_i\cdot (E-E_i)$, is equal to two. Thus, we can apply \cite[Proposition 2.14]{Wahl} to obtain the vanishing.
\end{proof}

\subsection{Proof of Theorems \ref{Introthm:local vanishing for F-pure} and \ref{Introthm:tame quotient}}

In this subsection, we prove Theorems \ref{Introthm:local vanishing for F-pure} and \ref{Introthm:tame quotient}.
We first show that Vanishing (I-0) holds for an lc surface pair in all characteristics:

\begin{prop}\label{prop:i=0}
    Let $(X,B)$ be an lc surface pair.
    Then Vanishing (I-0) holds.
\end{prop}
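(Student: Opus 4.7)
The plan is to apply the ideal-sheaf sequence $0 \to \sO_Y(-B_Y) \to \sO_Y \to \sO_{B_Y} \to 0$ and push it forward along $f$. Since the fibers of $f$ have dimension at most one, we have $R^{\geq 2} f_* = 0$, so the resulting long exact sequence ends with
\[
\sO_X \to f_* \sO_{B_Y} \to R^1 f_* \sO_Y(-B_Y) \to R^1 f_* \sO_Y \to R^1 f_* \sO_{B_Y} \to 0.
\]
Hence Vanishing (I-0) reduces, at each closed point $x \in X$, to two statements: (a) $\sO_X \to f_* \sO_{B_Y}$ is surjective, and (b) $R^1 f_* \sO_Y \to R^1 f_* \sO_{B_Y}$ is an isomorphism.

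Passing to the formal completion at $x$, I would decompose $B_Y|_{\hat Y_x} = E_x \cup D_x$, where $E_x$ is the compact exceptional fiber and $D_x$ is the formal completion of $f^{-1}_*\lfloor B \rfloor$ along $\hat Y_x$---a disjoint union of smooth formal disks meeting $E_x$ transversally at smooth points of $E_x$, since $f$ is a log resolution. The Mayer--Vietoris sequence
\[
0 \to \sO_{E_x \cup D_x} \to \sO_{E_x} \oplus \sO_{D_x} \to \sO_{E_x \cap D_x} \to 0,
\]
combined with $H^1(D_x,\sO) = H^1(E_x \cap D_x,\sO) = 0$ and the surjectivity of the evaluation map $H^0(D_x) \twoheadrightarrow H^0(E_x \cap D_x)$, yields an isomorphism $H^1(B_Y|_{\hat Y_x},\sO) \cong H^1(E_x,\sO_{E_x})$. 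This identifies $(R^1 f_* \sO_{B_Y})_{\hat x}$ with $H^1(E_x,\sO_{E_x})$ and reduces (b) to showing $(R^1 f_* \sO_Y)_{\hat x} \to H^1(E_x,\sO_{E_x})$ is an isomorphism.

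To verify this last statement, I would invoke the classification of lc surface singularities (Koll\'ar \S 3.3): in each case $E_x$ is either a tree of rational curves (so both $(R^1 f_* \sO_Y)_{\hat x}$ and $H^1(E_x, \sO_{E_x})$ vanish by rationality of $X$), an irreducible elliptic curve (simple elliptic), or a cycle of rational curves (cusp). In the latter two cases, both sides are one-dimensional and the natural map is an isomorphism by the theorem on formal functions together with stabilization of $H^1(nE_x,\sO_{nE_x})$ for large $n$. For (a), the same Mayer--Vietoris identifies $(f_* \sO_{B_Y})_{\hat x}$ with $\{(c,(g_i)) \in k \oplus \prod_i k\llbracket t_i \rrbracket : g_i(0) = c \text{ for all } i\}$; the surjectivity of $\sO_{X,x}^{\wedge} \to (f_* \sO_{B_Y})_{\hat x}$ then follows because each disk $D_i$ corresponds to a distinct smooth analytic branch of $\lfloor B \rfloor$ at $x$, so that any compatible system of branch-restrictions is the restriction of a single function on $X$.

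The main technical obstacle is verifying the stabilization of $H^1(nE_x,\sO_{nE_x}) \to H^1(E_x,\sO_{E_x})$ in the non-rational lc cases, which requires controlling the conormal bundle $\sO_Y(-E_x)|_{E_x}$: for simple elliptic singularities it is ample of degree $-E_x^2 > 0$ on the elliptic curve $E_x$, while for cusps a component-wise analysis exploiting that $E_x$ has arithmetic genus one and that the restriction of $\sO_Y(-E_x)$ to each rational component of the cycle has the required non-negativity yields the desired stabilization.
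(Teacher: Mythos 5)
Your proposal starts from the same ideal-sheaf sequence $0 \to \sO_Y(-B_Y) \to \sO_Y \to \sO_{B_Y} \to 0$ that the paper uses in its rational, $B=0$ case, but then pursues a genuinely different route for the remaining cases. The paper, after reducing to the minimal good resolution, dispatches the non-rational $B=0$ case (simple elliptic or cusp) by Grauert--Riemenschneider vanishing (there $-B_Y = K_Y$), and the $B\neq 0$ case by Proposition \ref{prop:toric} for cyclic quotients plus GR vanishing for klt pairs followed by an inductive peeling-off of exceptional components for the dihedral case. You instead stay within the exact sequence throughout, reducing to the surjectivity of $\sO_X \to f_*\sO_{B_Y}$ and the injectivity of $R^1f_*\sO_Y \to R^1f_*\sO_{B_Y}$ (you state isomorphism, but only injectivity is needed), and verify both via a Mayer--Vietoris decomposition $B_Y = E \cup D$ on the formal completion and a stabilization of the inverse system $\{H^1(nE,\sO_{nE})\}$. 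This is more self-contained (no appeal to GR-for-klt or to the toric computation) at the price of more hands-on verification: you should make explicit that the vanishing $H^1(E, \sO_E(-(n-1)E))=0$ in the cusp case rests on $\omega_E \cong \sO_E$, $\deg \sO_E(-E)|_{E_i} = -E_i^2 - 2 \geq 0$ on each component, and $E^2 < 0$ forcing total degree strictly negative, so that $H^0(\sO_E((n-1)E))=0$; and that the identification of $(f_*\sO_{B_Y})^\wedge_x$ with compatible tuples together with the surjectivity of $\sO_{X,x}^\wedge$ onto it relies on the classification telling you that the analytic branches of $B$ at $x$ are smooth and at worst nodal, so that each $D_i$ maps isomorphically onto its image and $\sO_{X,x}^\wedge \twoheadrightarrow \sO_{B,x}^\wedge$ hits the full compatibility locus. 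Both proofs ultimately rest on the classification of lc surface singularities in \cite[Section 3.3]{Kol13}; the trade-off is that the paper leans on known vanishing theorems where you instead do explicit formal-function bookkeeping.
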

\begin{proof}
    Since $K_X+\lfloor B\rfloor$ is $\Q$-Cartier \cite[Proposition 7.2]{Gra}, replacing $B$ with $\lfloor B\rfloor$, we may assume that $B$ is reduced.
    We fix a closed point $x\in X$.
    Let $f\colon Y\to X$ be the minimal good resolution at $x$ with $E\coloneqq \Exc(f)$ and $B_Y\coloneqq f^{-1}_{*}B+E$.
    We aim to show that $R^1f_{*}\sO_Y(-B_Y)_{x}=0$.
    
    \noindent\textbf{Case 1: The case $B=0$ and $X$ is rational at $x$.}\,\, In this case,
     we conclude as follows:
    \[
    H^0(Y, \sO_Y)\twoheadrightarrow H^0(E, \sO_{E})\cong k \to R^1f_{*}\sO_Y(-E) \to R^1f_{*}\sO_Y=0,
    \]
    where we have $H^0(E, \sO_{E})\cong k$ since $E$ is reduced and connected.
    
    \noindent\textbf{Case 2: The case $B=0$ and $X$ is not rational at $x$.}\,\,
    In this case, $(x\in X)$ is a simple elliptic or a cusp singularity.
    In particular, $-B_Y=-E=K_Y$.
    Then we obtain the desired vanishing by Grauert--Riemenschneider vanishing \cite[Theorem 10.4]{Kol13}.
    
    \noindent\textbf{Case 3: The case $B\neq0$ at $x$.}\,\,
    By classification, one of the following holds:
        \begin{enumerate}
            \item $(x\in X,B)$ is an lc cyclic quotient singularity.
            \item $(x\in X,B)$ is a plt cyclic quotient singularity.
            \item $(x\in X,B)$ is a dihedral quotient singularity.
        \end{enumerate}
    If (1) or (2) holds, then the vanishing follows from Proposition \ref{prop:toric}.
    Thus, we may assume that (3) holds.
    \begin{clm}\label{cl:rational vanishing}
        $R^1f_{*}\sO_Y(-B')=0$, where $B'\coloneqq f^{-1}_{*}B$.
    \end{clm}
    \begin{clproof}
        In this case, every $f$-exceptional divisor has coefficient bigger than zero in $f^{*}(K_X+B)-K_Y$, and
        there exist prime $f$-exceptional divisors whose coefficients in $f^{*}(K_X+B)-K_Y$ are $1/2$ (see Lemma \ref{lem:reduction of lc case}).
        Then taking $N=0$ and $\Delta_X=f^{*}(K_X+B)-K_Y-B'$ in \cite[Theorem 10.4]{Kol13}, we obtain $R^1f_{*}\sO_Y(-B')=0$.
    \end{clproof}
    Now, we deduce the desired vanishing from Claim \ref{cl:rational vanishing}.
    Since $B_Y$ is connected, we can take an irreducible component $E_1\subset E$ such that $E_1\cdot B'>0$.
    Then the exact sequence
    \[
    0\to \sO_Y(-B'-E_1)\to \sO_Y(-B') \to \sO_{E_1}(-B')\to 0,
    \]
    we obtain $R^1f_{*}\sO_Y(-B'-E_1)=0$.
    Similarly, choosing an irreducible component $E_2\subset E-E_1$ so that $E_2\cdot (B'+E_1)>0$, we obtain
    $R^1f_{*}\sO_Y(-B'-E_1-E_2)=0$.
    Repeating this procedure, we conclude that 
    \[
    R^1f_{*}\sO_Y(-B_Y)=R^1f_{*}\sO_Y(-B'-E_1-\cdots-E_n)=0,
    \]
    as desired.
\end{proof}

\begin{proof}[Proof of Theorem \ref{Introthm:local vanishing for F-pure}]
    By replacing the base field with its algebraic closure, we can work over an algebraically closed field.
    We note that log canonically and $F$-purity is preserved by the base change (Remark \ref{rem:base change of F-purity}).
    By \cite[Proposition 7.2]{Gra} and Remark \ref{rem:reduction and F-purity}, by replacing $B$ with $\lfloor B\rfloor$, we may assume that $B$ is reduced.
    The case where $i=0$ follows from Proposition \ref{prop:i=0}.
    The case where $i=2$ is nothing but Grauert--Riemenschneider vanishing \cite[Theorem 10.4]{Kol13}.
    We note that so far we have not used the condition (1) or (2) in Theorem \ref{Introthm:local vanishing for F-pure}.
    
    Now, we focus on the case where $i=1$.
    We fix a closed point $x\in X$ and show that $(X,B)$ satisfies Vanishing (I-1) at $x\in X$.
    By Proposition \ref{prop:reduction to Galois cover}, Lemmas \ref{lem:reduction of F-pure dlt case}, \ref{lem:reduction of lc case}, and \ref{lem:reduction of F-pure lc case}, and Remark \ref{rem:reduction of dlt case}, taking a suitable finite Galois cover, we may assume that one of the following holds:
    \begin{enumerate}
        \item $(x\in X,B)$ is toric.
        \item $B=0$ and $(x\in X)$ is an $F$-pure RDP.
        \item $B=0$ and $(x\in X)$ is Gorenstein lc.
    \end{enumerate}
    Then the assertion follows from Propositions \ref{prop:toric}, \ref{prop:canonical}, and \ref{prop:Gor lc}, respectively.
\end{proof}

\begin{proof}[Proof of Theorem \ref{Introthm:tame quotient}]
     In the case where (1) holds, by definition, there exists a finite Galois cover $g\colon (x'\in X')\to (x\in X)$ degree prime to $p$ from a singularity $(x'\in X')$ of a smooth surface.
    Then the assertion follows from Propositions \ref{prop:reduction to Galois cover} and \ref{prop:independence}.
    
    In the case where (2) or (3) holds, we can deduce the assertion from Theorem \ref{Introthm:local vanishing for F-pure} and an argument of \cite[Section 5]{Mustata-Olano-Popa}. 
    In fact, \cite[Proposition 2.2]{Mustata-Olano-Popa} works in all characteristics, and thus the assertion can be reduced to the surjectivity of
    $\alpha\colon \bigoplus_{i}H^0(E_i,\sO_{E_i})\to H^1(Y,\Omega_Y^1)$, which is induced by the residue exact sequence.
    Let $\beta\colon H^1(Y,\Omega_Y^1)\to \bigoplus_{i}H^1(E_i,\Omega_{E_i})\cong \bigoplus_{i}H^0(E_i,\sO_{E_i})$ the map induced by the restriction.
    Then $\beta\circ \alpha$ is given by the intersection matrix $(E_i\cdot E_j)_{i,j}$, which is an isomorphism by the assumption that the determinant is prime to $p$ (see \cite[Section 8.C]{Gra}).
    By Theorem \ref{Introthm:local vanishing for F-pure}, we have $R^1f_{*}\Omega_Y^1(\log\,E)(-E)=0$. Thus, the same argument as \cite[Section 5]{Mustata-Olano-Popa} shows that $\beta$ is an isomorphism. Therefore, $\alpha$ is an isomorphism, and we conclude (2) and (3).
\end{proof}

\section{A necessary condition of Vanishing (I-1)}

Throughout this section, we work over a perfect field of characteristic $p>0$.

\subsection{Hara's Cartier operators}\label{sebsec:Hara Cartier operator}
Throughout this subsection, we use the following convention.

\begin{conv}
Let $Y$ be a smooth variety, $E$ a reduced snc divisor on $Y$, and $D$ a $\Q$-divisor on $Y$ such that the support $\Supp(\{D\})$ of the factional part is contained in $E$.
Set $D'\coloneqq \lfloor pD\rfloor-p\lfloor D\rfloor$.
Then $D'$ is an effective Cartier divisor such that $\Supp(D')\subset E$ and every coefficient of $D'$ is less than $p$.
\end{conv}

In what follows, we summarize the Cartier operator generalized by Hara \cite[Section 3]{Hara98}.
Let $j'\colon Y\setminus E\hookrightarrow Y$ be the inclusion.
Considering $\Omega^{i}_Y(\log\,E)(D')$ as a subsheaf of $(j')_{*}\Omega^{i}_{Y\setminus E}$, we obtain the complex
\[
\Omega_Y^{\mydot}(\log\,E)(D')\colon \sO_Y(D')\xrightarrow{d}\Omega^1_Y(\log\,E)(D') \xrightarrow{d} \cdots.
\]
Taking the Frobenius pushforward and tensoring with $\sO_Y(D)=\sO_Y(\lfloor D \rfloor)$, we obtain a complex
\[
F_{*}\Omega_Y^{\mydot}(\log\,B)(pD) \colon F_{*}\sO_Y(pD) \xrightarrow{F_{*}d\otimes\sO_Y(D)}
F_{*}\Omega^1_Y(\log\,E)(pD) \xrightarrow{F_{*}d\otimes\sO_Y(D)} \cdots
\]
of $\sO_Y$-modules, where 
\[
F_{*}\Omega^{i}_Y(\log\,E)(pD)\coloneqq F_{*}(\Omega^{i}_Y(\log\,E)\otimes \sO_Y(\lfloor pD\rfloor ))
\]
for all $i\geq 0$. We define coherent $\sO_Y$-modules by 
\begin{align*}
    &B^{i}_Y((\log\,E)(pD))\coloneqq \mathrm{Im}(F_{*}d\otimes\sO_Y(D) \colon F_{*}\Omega^{i-1}_Y(\log\,E)(pD) \to F_{*}\Omega^{i}_Y(\log\,E)(pD)),\\
    &Z^{i}_Y((\log\,E)(pD))\coloneqq \mathrm{Ker}(F_{*}d\otimes\sO_Y(D) \colon F_{*}\Omega^{i}_Y(\log\,E)(pD) \to F_{*}\Omega^{i+1}_Y(\log\,E)(pD)),
\end{align*}
for all $i\geq 0$.

\begin{lem}\label{lem:Cartier isomorphism (log smooth case)}
There exist the following exact sequences:
\begin{equation}
    0 \to Z^{i}_Y((\log\,E)(pD)) \to F_{*}\Omega^{i}_Y(\log\,E)(pD) \to B^{i+1}_Y((\log\,E)(pD))\to 0,\label{log smooth 1}
\end{equation}
\begin{equation}
    0 \to B^{i}_Y((\log\,E)(pD))\to Z^{i}_Y((\log\,E)(pD))\xrightarrow{C^{i}_{Y,E}(D)} \Omega^{i}_Y(\log\,E)(D)\to 0,\label{log smooth 2}
\end{equation}
for all $i\geq0$.
The map $C^{i}_{Y,E}(D)$ coincides with $C^{i}_{Y,E}\otimes \sO_Y(D)$ on $Y\setminus \Supp(D')$, where $C^{i}_{Y,E}$ is the usual logarithmic Cartier operator \cite[Theorem 7.2]{Kat70}.

Moreover, $B^{i}_Y((\log\,E)(pD))$ and $Z^{i}_Y((\log\,E)(pD))$ are locally free $\sO_Y$-modules. 
\end{lem}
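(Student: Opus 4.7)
The plan is to reduce to an explicit \'etale-local calculation following Hara's strategy in \cite[Section 3]{Hara98}. The first exact sequence is tautological: by the very definitions, $Z^{i}_Y((\log\,E)(pD))$ is the kernel and $B^{i+1}_Y((\log\,E)(pD))$ is the image of $F_{*}d\otimes\sO_Y(D)$, so the sequence is simply the factorization of this differential through its image. All the content therefore lies in constructing the map $C^{i}_{Y,E}(D)$, proving exactness of the second sequence, and establishing local freeness of $B^{i}$ and $Z^{i}$.

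For the construction, I would pick \'etale-local coordinates $x_1,\ldots,x_n$ on $Y$ with $E=(x_1\cdots x_r)$ and write $D'=\sum_{i=1}^{r}a_i(x_i)$ with $0\le a_i\le p-1$. A local basis of $F_{*}\Omega^{i}_Y(\log\,E)(pD)$ is then given by monomials $x^{\beta}\omega_I$, where $\omega_I$ runs through the standard logarithmic basis forms of $\Omega^{i}_Y(\log\,E)$ and $\beta$ ranges over a suitable exponent set shifted by $\lfloor pD\rfloor$. On this basis I would define $C^{i}_{Y,E}(D)$ by a formula analogous to the classical logarithmic Cartier operator: on the ``critical'' component (those monomials whose exponents satisfy the correct congruence mod $p$) the map sends $x^{pc+(p-1)\mathbf{1}_{I}-a}\omega_I$ to $x^{c}\omega_I$, and on the complementary components it sends everything to zero, where $a=(a_1,\ldots,a_r,0,\ldots,0)$ is the shift needed because $\lfloor pD\rfloor\neq p\lfloor D\rfloor$ along $\Supp(D')$. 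On $Y\setminus\Supp(D')$ the shift $a$ vanishes and the formula reduces to Katz's operator \cite[Theorem 7.2]{Kat70} tensored with $\sO_Y(D)$, which gives the asserted compatibility. Exactness of the second sequence then follows by a direct monomial-by-monomial check: the differential acts diagonally on the exponent-class eigen-decomposition, the critical component maps isomorphically onto $\Omega^{i}_Y(\log\,E)(D)$ via $C^{i}_{Y,E}(D)$, and the non-critical components generate $B^{i}$ as a direct summand inside $Z^{i}$.

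The main obstacle will be the bookkeeping around $\Supp(D')$: the twist shifts exponent classes modulo $p$ away from the residues familiar in the classical setting, so one must verify carefully that $C^{i}_{Y,E}(D)$, $B^{i}$, and $Z^{i}$ are well-defined globally and that the local eigen-decomposition is independent of the coordinate choice (so that it patches across \'etale charts). Once this is in place, local freeness of $B^{i}$ and $Z^{i}$ is automatic, since in the local model each is spanned by an explicit subset of a monomial basis and thus appears as a direct summand of a locally free $\sO_Y$-module of finite rank; consequently the first exact sequence also consists of locally free terms.
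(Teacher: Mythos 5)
Your strategy matches the one the paper implicitly adopts: the paper's proof of this lemma is a one-line citation to \cite[Lemma 3.2]{Kaw4}, and that reference in turn carries out the \'etale-local computation in the style of Hara \cite[Section 3]{Hara98}, exactly as you propose. Your observation that \eqref{log smooth 1} is tautological from the definitions of $Z^i$ and $B^{i+1}$ is correct, and your identification that all the content lies in constructing $C^{i}_{Y,E}(D)$ and checking \eqref{log smooth 2} and local freeness is the right diagnosis.

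Two places deserve tightening. First, the displayed formula $x^{pc+(p-1)\mathbf{1}_I-a}\omega_I\mapsto x^c\omega_I$ is under-specified and likely not quite right as written: the shift $(p-1)$ should only be applied in the \emph{non-logarithmic} directions of $I$ (the classical Cartier operator sends $x^{p-1}dx\mapsto dx$ but $\frac{dx}{x}\mapsto\frac{dx}{x}$ with no shift), while the twist $a=(a_1,\dots,a_r,0,\dots,0)$ lives on the logarithmic directions supported in $E$. You should separate these two bookkeeping devices before claiming the monomial-by-monomial check goes through. Second, you flag but do not resolve the global well-definedness of $C^{i}_{Y,E}(D)$. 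The cleanest route, and the one that makes the ``coincides with $C^i_{Y,E}\otimes\sO_Y(D)$ off $\Supp(D')$'' clause do real work, is to note that $Z^{i}_Y((\log E)(pD))$ sits inside $Z^{i}_Y(\log E)\otimes K(Y)$, that the ordinary (canonical, coordinate-free) log Cartier operator extends to the generic point, and that the only thing requiring a local computation is that the image of $Z^{i}_Y((\log E)(pD))$ under this extension lands in $\Omega^i_Y(\log E)(D)\subset\Omega^i_Y(\log E)\otimes K(Y)$ with the expected cokernel. That turns your coordinate calculation into a verification of a containment rather than a construction, sidestepping the patching worry entirely. With these two points addressed, the local freeness claim then does follow as you say, since in the local model $Z^i$ and $B^i$ are visibly direct summands of a free module.
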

\begin{proof}
See \cite[Lemma 3.2]{Kaw4}.
\end{proof}

\begin{rem}\label{rem:tensoring Z-divs}
If $D$ is a Cartier divisor, then $\Supp(D')=\emptyset$ and 
\begin{align*}
    &B^{i}_Y((\log\,E)(pD))= B^{i}_Y(\log\,E)\otimes \sO_Y(D)\\
    &Z^{i}_Y((\log\,E)(pD))= Z^{i}_Y(\log\,E)\otimes \sO_Y(D),\,\,\text{and}\\
    &C^{i}_{Y,E}(D)=C^{i}_{Y,E}\otimes \sO_Y(D)
\end{align*}
hold for all $i\geq 0$ by Lemma \ref{lem:Cartier isomorphism (log smooth case)}.
\end{rem}

\subsection{Proof of Theorem \ref{Introthm:Local vanishing to F-inj}}

In this subsection, we prove Theorem \ref{Introthm:Local vanishing to F-inj} using Hara's Cartier operator in the previous subsection.

\begin{defn}\label{def:F-inj}
Let $X$ be a variety and $x\in X$ a point. We say that $X$ is \textit{$F$-injective at $x$} if the map
\[
H^i_{\m_x}(\sO_{X, x})\xrightarrow{F} H^i_{\m_x}(\sO_{X, x})
\] 
induced by the Frobenius morphism is injective for all $i\geq 0$.
We say that $X$ is \textit{$F$-injective} if $X$ is $F$-injective at every point of $X$.
\end{defn}
\begin{rem}\label{rem:F-inj}
    If $X$ is Cohen-Macaulay, then $X$ is $F$-injective if and only if the trace map $F_{*}\omega_X\to \omega_X$ of the Frobenius map is surjective \cite[Proposition 3.19 (1)]{Takagi-Watanabe}.
\end{rem}

\begin{proof}[Proof of Theorem \ref{Introthm:Local vanishing to F-inj}]
By taking $D=-1/pE$ in \eqref{log smooth 1}, we have an exact sequence
\[
0\to Z_Y^1((\log\,E))(-E))\to F_{*}\Omega_Y^1(\log\,E))(-E) \to B^2_Y((\log\,E)(-E))\to 0
\]
We have $R^1f_{*}\Omega_Y^1(\log\,E)(-E)=0$ by assumption, and $R^2f_{*}Z_Y^1((\log\,E))(-E))=0$ since $\dim\,E\leq 1$.
Thus, we obtain \[R^1f_{*}B^2_Y((\log\,E)(-E))=0\]
by the above exact sequence.

By taking $D=-1/pE$ in \eqref{log smooth 2}, we have an exact sequence
\[
0\to B_Y^2((\log\,E))(-E))\to F_{*}\Omega_Y^2(\log\,E)(-E) \to \Omega^2_Y(\log\,E)(-1/pE)\to 0.
\]
We have $F_{*}\Omega_Y^2(\log\,E)(-E)=F_{*}\omega_Y$ and \[\Omega^2_Y(\log\,E)(-1/pE)=\omega_Y(E+\lfloor -1/pE\rfloor)=\omega_Y.\]
Since $X$ is rational, we have $f_{*}\omega_Y=\omega_X$.
Thus, taking the pushforward by $f$, we obtain an exact sequence
\[
F_{*}\omega_X\xrightarrow{\mathrm{Tr_F}}\omega_X\to R^1f_{*}B^2_Y((\log\,E)(-E))=0.
\]
Therefore, $X$ is $F$-injective (Remark \ref{rem:F-inj}). 
\end{proof}

\begin{rem}
    In the above proof, we showed that $B^2_Y((\log\,E)(-E))$ coincides with the kernel of the trace map $F_{*}\omega_Y\to \omega_Y$, which is equal to $B_Y^2$ by definition.
    In general, for a pair $(Y,E)$ of a smooth variety $Y$ and a reduced snc divisor $E$, we have $B_Y^{\dim\,Y}=B^{\dim\,Y}_Y((\log\,E)(-E))$.
\end{rem}

\section*{Acknowledgements}
The author expresses his gratitude to Teppei Takamatsu and Shou Yoshikawa for valuable conversations, and to the anonymous referee for comments that improved the exposition of this paper.
This work was supported by JSPS KAKENHI Grant number JP22KJ1771 and JP24K16897.


\end{document}